\numberwithin{equation}{section}
\newtheorem{theorem}{Theorem}[section]
\newtheorem{subtheorem}{Theorem}[theorem]
\newtheorem{lemma}[theorem]{Lemma}
\newtheorem{obs}[theorem]{Observation}
\newtheorem{proposition}[theorem]{Proposition}
\newtheorem{cor}[theorem]{Corollary}
\newtheorem{conjecture}[theorem]{Conjecture}
\theoremstyle{remark}
\newtheorem{claim}[subtheorem]{Claim}
\newtheorem{remark}[theorem]{Remark}
\newcommand{\probability}{\mathbf{P}}
\newcommand{\expectation}{\mathbf{E}}
\newcommand{\outneighbor}{\Gamma^+}
\newcommand{\Hom}{\mathrm{Hom}}
\newcommand{\Image}{\mathrm{Im}}
\newcommand{\calF}{\mathcal{F}}
\newcommand{\send}{\eta}
\newcommand{\Psione}{\boldsymbol{\Psi^\lambda}}
\newcommand{\Fork}{\mathcal{\chi}}
\def\lsem{\left\llbracket}
\def\rsem{\right\rrbracket}
\def\Ind{{\rm Ind}}
\def\FFork{{\rm Fork}}
\begin{document}

\title{Counting flags in triangle-free digraphs}

\author{Jan Hladk\'y}
  \address{Institute of Mathematics of the Academy of Sciences of the Czech Republic, \v Zitn\'a 25, Praha. Mathematics Institute is supported by RVO:67985840.}
  \email{honzahladky@gmail.com}
\author{Daniel Kr\'al'}
\address{Mathematics Institute and Department of Computer Science, University of
Warwick, Coventry CV4 7AL, United Kingdom.}
\email{D.Kral@warwick.ac.uk}
\author{Sergey Norin}
\address{Department of Mathematics \& Statistics, McGill
         University, Burnside Hall, 805~Sherbrooke West,
		 Montreal, QC, H3A~2K6, Canada.}
\email{snorin@math.mcgill.ca}
  \thanks{JH and DK were supported partially supported by
  Grant Agency of Charles University, grant
 GAUK~202-10/258009. SN was
  supported in part by NSF under Grant No. DMS-0701033 and an NSERC discovery grant.
  An extended
abstract containing this result appeared in the proceedings of the EuroComb~2009
conference. A major revision of the paper was done during a visit of the first two authors to the Institut Mittag-Leffler (Djursholm, Sweden).
  }

\begin{abstract}
Motivated by the Caccetta--H\"aggkvist Conjecture, we prove
that every digraph on $n$ vertices with minimum outdegree $0.3465n$ contains an oriented triangle. This
improves the bound  of $0.3532n$ of Hamburger, Haxell and
Kostochka.  The main new tool we use in our proof is the theory of
flag  algebras developed recently by Razborov.
\end{abstract}

\maketitle
\section{Introduction}\label{intro}
One of the most intriguing problems of extremal (di)graph
theory is the following conjecture of Caccetta and
H\"aggkvist~\cite{cite:CH} dating back to~1978 (we give definitions used throughout the paper in Section~\ref{sec:FlagAlgebras}).
\begin{conjecture}\label{conj:CH}
Every $n$-vertex digraph with minimum outdegree at least
$r$ has a cycle with length at most $\lceil n/r\rceil$.
\end{conjecture}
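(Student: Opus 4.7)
The plan is to attack the conjecture by combining flag algebra semidefinite bounds with a stability argument anchored at the known extremal construction, namely balanced blow-ups of the directed cycle $C_\ell$ with $\ell = \lceil n/r\rceil$. I would fix the target girth $\ell$ and assume for contradiction that $D$ is an $n$-vertex digraph with $\delta^+(D)\geq r$ and girth strictly greater than $\ell$. A warm-up iteration of the out-neighborhood operator, together with the girth hypothesis, already yields a crude estimate of Caccetta--H\"aggkvist type via BFS-level disjointness, so the task reduces to closing the gap between this first bound and the sharp threshold $r = n/\ell$.

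Next, I would lift the problem to the flag algebra framework of Razborov. Encoding the outdegree, girth, and cycle-count constraints as linear relations in the algebra of digraph flags, one searches for a positive semidefinite combination of Cauchy--Schwarz type inequalities that certifies an upper bound on the asymptotic density $r/n$ under the girth assumption. For $\ell=3$ this machinery yields the bound $0.3465$ announced in the abstract; the hope would be to set up an analogous SDP for each $\ell \geq 3$ and drive the bound to $1/\ell$ by enlarging the flag size. In parallel, one would prove a rigidity lemma: every digraph limit achieving density exactly $1/\ell$ with girth at least $\ell+1$ is the balanced blow-up of $C_\ell$. This rigidity feeds a standard removal/stability step which asserts that any $D$ with outdegree within $\epsilon n$ of the threshold can be turned into a balanced $C_\ell$-blow-up by editing at most $\epsilon' n^2$ arcs.

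The final step would be an integrality argument converting the asymptotic bound into the sharp statement for all $n$. Because a $C_\ell$-blow-up has minimum outdegree exactly $\lfloor n/\ell \rfloor$, the stability step would leave only two possibilities: either $D$ is itself a blow-up, in which case it contains a cycle of length exactly $\ell$ and we are done, or the small number of edge edits needed to reach a blow-up can be shown to create a cycle of length at most $\ell$ directly, by a local analysis of the perturbation.

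The main obstacle, and the reason the conjecture has resisted proof since 1978 even in the case $\ell = 3$, is that no currently known technique drives the SDP bound arbitrarily close to $1/\ell$. The sequence of flag algebra improvements for triangles appears to plateau strictly above $1/3$, which strongly suggests that a genuinely new local inequality --- perhaps one exploiting long directed paths, or an averaging identity tied to integrality of the outdegree rather than the density --- must be introduced. A secondary difficulty is that the rigidity lemma I sketched is false in the naive form: there are fractional constructions (e.g.\ the Bondy-type counterexamples to various strengthenings) with density very close to $1/\ell$ that are far in edit distance from any blow-up of $C_\ell$, so the stability step cannot be purely density-driven. Overcoming this likely requires extracting non-local information (forbidden path configurations, or long alternating structures) that the flag algebra calculus in its current form does not see, and I would expect the proposal to stall here in the absence of a fundamentally new combinatorial idea.
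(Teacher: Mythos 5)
This statement is Conjecture~\ref{conj:CH}, the Caccetta--H\"aggkvist Conjecture itself; the paper does not prove it and offers no proof to compare against. The authors only establish the partial result of Theorem~\ref{thm:main}: the special case $r=n/3$ (girth $3$) holds under the stronger hypothesis $\delta^+(D)\ge 0.3465n$ rather than the conjectured $n/3$. So the correct response to this prompt is not a proof attempt but the observation that the statement is open, and your text, to its credit, effectively concedes this in its final paragraph.

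As a proof, your proposal has genuine gaps at every stage beyond the first. The flag-algebra/SDP step is exactly what the paper carries out for $\ell=3$ (Sections~\ref{sec:FlagAlgebras}--\ref{sec:ProofofProp}, combining Cauchy--Schwarz inequalities, an out-regularity identity, an induction lemma, and the Chudnovsky--Seymour--Sullivan fork bound), but there is no known mechanism forcing the resulting bounds to converge to $1/\ell$ as the flag size grows; the minimum-outdegree condition is precisely the kind of constraint that falls outside the Lov\'asz--Szegedy completeness result cited in Section~\ref{sec:FlagAlgebras}, so one cannot even argue in principle that enlarging the SDP must eventually succeed. The rigidity lemma you invoke is unproven and, as you yourself note, likely false in the naive form because the extremal structure for Caccetta--H\"aggkvist is not unique (there are non-blow-up near-extremal examples). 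And the final ``integrality'' step --- showing that the $\epsilon' n^2$ edits needed to reach a blow-up must themselves create a short cycle --- is asserted without any argument and is where such stability approaches typically collapse. In short: the conjecture remains open, the paper proves only a weaker statement by a concrete finite computation, and your sketch correctly diagnoses why it cannot currently be completed.
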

For each $r$ and $n$, there is a whole family of digraphs that are believed to be extremal for the conjecture, see~\cite{Razb:CH}. (And the diversity of these digraphs is probably the reason for the difficulty of the conjecture.)
Many results related to the conjecture can be found in a survey by Sullivan~\cite{Sullivan}.

The case when $r=n/3$ is of particular interest. It
asserts that any $n$-vertex digraph with minimum
outdegree at least $n/3$ contains a triangle.
Our main result gives a new minimum outdegree bound for
this case of  the Caccetta--H\"aggkvist Conjecture.
\begin{theorem}\label{thm:main}
Every $n$-vertex digraph with minimum outdegree  at least
$0.3465n$ contains a triangle.
\end{theorem}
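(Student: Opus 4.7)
The plan is to invoke Razborov's flag algebra machinery in the universe of triangle-free digraphs. Suppose for contradiction that there exists a sequence of $n$-vertex counterexamples $D_n$ with $n \to \infty$, each triangle-free and with $\delta^+(D_n) \ge 0.3465 n$. Passing to a subsequence, the induced density $d(H, D_n)$ converges for every finite digraph $H$, yielding a homomorphism $\phi \in \HomAR$ from the triangle-free flag algebra $\mathcal{A}^0$ to $\mathbb{R}$. Every provably nonnegative element of $\mathcal{A}^0$ is then mapped by $\phi$ to a nonnegative real; the task is to combine such inequalities with the outdegree hypothesis into an outright contradiction.

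First I would encode the minimum outdegree condition algebraically. For the one-vertex type $\sigma$, the ``out-edge'' $\sigma$-flag $F$ --- the rooted two-vertex digraph in which the root sends an edge to the other vertex --- has density $\deg^+(v)/(n-1)$ at every $v$, so $F - 0.3465 \cdot \mathbf{1}_\sigma$ is pointwise nonnegative; the same is true of its product with any square of a $\sigma$-flag. Unlabelling these products produces members of the semantic cone. The core step is then to exhibit a positive-semidefinite combination of squared-and-unlabelled $\tau$-flags, taken over several small types $\tau$ --- the one-vertex type, the directed-edge type, the non-adjacent pair, and the two-edge types on three vertices --- which, together with the deficit term and triangle-freeness (which kills every flag containing $C_3$), produces the contradiction. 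The optimal coefficients come from a semidefinite program whose dual reports the best attainable outdegree threshold.

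The principal obstacle will be twofold. First, the SDP is too large to solve by hand: a numerical solver is required, and its approximate output must then be certified by rounding to an exactly positive-semidefinite rational matrix, so that the resulting combination of flag identities constitutes a rigorous symbolic proof. Second, the choice of type set and maximum flag size is delicate --- too restrictive and no improvement over the $0.3532 n$ bound of Hamburger, Haxell and Kostochka appears, while too liberal and the SDP becomes intractable; my expectation is that flags of order four or five on the types above are what is needed to reach $0.3465$. Once a certified matrix has been produced, it witnesses that every homomorphism $\phi$ arising from a triangle-free digraph limit must satisfy $\phi(F) < 0.3465$, contradicting the hypothesis $\delta^+(D_n) \ge 0.3465 n$ and establishing the theorem.
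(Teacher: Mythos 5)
Your setup is correct and matches the paper's high-level architecture: pass to a limit homomorphism $\phi\in\Hom^+(\mathcal A^0,\mathbb R)$, encode the outdegree condition, and use the semidefinite method with Cauchy--Schwarz products of rooted flags. However, there is a genuine gap: a pure Cauchy--Schwarz SDP with the degree constraint and triangle-freeness, even at order $4$ or $5$, does not reach $0.3465$. The paper itself frames the semidefinite method as only one of three ingredients, and the other two are absent from your plan.

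The first missing ingredient is the \emph{fork density bound} (Lemma~\ref{lem:forkdensitylimit}, which yields Corollary~\ref{cor:fork}): using the Chudnovsky--Seymour--Sullivan theorem on deleting edges to make a sparse triangle-free digraph acyclic, one derives the nonlinear constraint $\rho(\kappa)\ge 3(3c-1)^2$ on the density of forks. This constraint cannot be extracted from Cauchy--Schwarz and is exactly what Hamburger, Haxell and Kostochka needed to push below Shen's $0.3543$. The second missing ingredient is the \emph{inductive argument} (Lemma~\ref{lem:induction}, Corollary~\ref{cor:induction}), which formalizes Shen's induction via the upward operator $\pi^{\sigma,\eta}$ and the $\wr$-product, producing the linear inequalities $Ind_T(\bar\rho)\ge 0$ and $Ind_V(\bar\rho)\ge 0$; these too lie outside the Cauchy--Schwarz cone. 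Without both of these, the polytope of feasible $\bar\rho$ remains nonempty at $c=0.3465$, and your SDP certificate would fail to close the contradiction. A further, smaller difference: the paper does not just use the pointwise inequality $\alpha - c \ge 0$; after normalizing (Observation~\ref{obs:outregularity} and Theorems~\ref{thm:limitsexist},~\ref{thm:mindegreeconvergence}) it assumes $\probability[\boldsymbol{\rho^1}(\alpha)=c]=1$, obtaining the sharper out-regularity \emph{equalities} $\rho(\lsem f\cdot(\alpha-c)\rsem_1)=0$ for all $f\in\mathcal A^1$, which can be multiplied by arbitrary flags rather than only squares; and its Cauchy--Schwarz step uses a single type $\beta$ at order $4$ rather than a family of types. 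You should revise the plan to incorporate the CSS-derived fork bound and the induction inequalities before the final linear combination.
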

Our result improves the previous known minimum-degree bounds established by
Caccetta and H\"aggkvist~\cite{cite:CH} ($0.3820n$),
Bondy~\cite{cite:Bondy} ($0.3798n$), Shen~\cite{Shen}
($0.3543n$) and Hamburger, Haxell, and Kostochka~\cite{HHK} ($0.3532n$).

The proof of Theorem~\ref{thm:main} uses the framework of flag algebras
which was developed by Razborov~\cite{FlagAlgebras}.
This framework provides a general formalism
which allows to deal with problems in
extremal combinatorics. Razborov used this approach to
solve a long-standing open problem on density of
triangles in graphs~\cite{RazborovTriangles}, and a special case of the
Tur\'an's problem for 3-uniform
hypergraphs~\cite{RazborovHyper}.
After posting the first version of this manuscript, several other applications of flag algebras appeared, see e.g.~\cite{BabTal:Flags,Grzesik:Pentagon,HHKNR:Pentagon,HHKNR:Wheel,Sudakov:Flags,Pikhurko:Flag,23:Flag,MonoTr:Flag}. In particular, Razborov~\cite{Razb:CH} proved Conjecture~\ref{conj:CH} with $r=n/3$ for digraphs avoiding three specific digraphs on four vertices. In addition, a software package that can be used to apply flag algebra methods to extremal combinatorics was developed by Vaughan. The package is publicly available at {\tt http://www.maths.qmul.ac.uk/$\sim$ev/flagmatic/}.

There are two more ingredients that we use in addition to the standard use of flag algebras,
which is also referred to as the ``semidefinite method'' by Razborov.
One of them is a variant of inductive arguments which can be found in~\cite{FlagAlgebras} and
the other is a result of Chudnovsky, Seymour and Sullivan~\cite{ChuSeySull}
on eliminating cycles in triangle-free digraphs.
A brute force computer search was used to combine these ingredients to give the bound.
However, the resulting proof is close to being computer-free,
only with Maple used to verify several hundred addition
and multiplication operations involving five-to-nine digit numbers.

The paper is organized as follows. In
Section~\ref{sec:FlagAlgebras} we present the notation used in the paper and
we survey the framework of flag algebras as needed in our proof.
The structure of triangle-free digraphs is treated in Section~\ref{sec:Struct}.
It contains a statement of the key Theorem~\ref{prop:asymptotic} and
gives a short proof of Theorem~\ref{thm:main} based on it.
Finally, we give a proof of Theorem~\ref{prop:asymptotic} in Section~\ref{sec:ProofofProp}.

\section{Notation}\label{sec:FlagAlgebras}

We start with introducing a general notation related to directed graphs.
A \emph{digraph} is a directed graph with no loops, no
parallel edges, and no counter-parallel edges.
A \emph{subdigraph} of a digraph $D$ is a digraph that can be obtained from $D$ by deleting some of its vertices and edges.
Given a digraph $D$ and a set $U\subseteq V(D)$ we write $D\setminus U$ for the subdigraph of $D$ obtained by deleting the vertices of $U$ and
$D[U]$ for the subdigraph induced by $U$, i.e. the subdigraph obtained by deleting all vertices except for those in $U$.
A \emph{cycle} of length $t$ is a digraph $C_t$ with $t$ vertices $v_0,\ldots, v_{t-1}$ and $t$ edges $v_iv_{i+1}$ (indices modulo $t$).
A \emph{triangle} is a cycle of length three. Finally, a digraph is \emph{acyclic} if it does not contain any cycle as a subdigraph.

If $D$ is a digraph, we write $V(D)$ and $E(D)$ for the set of vertices and for the set of edges of a digraph $D$.
A vertex $v$ is an \emph{outneighbor} of $u$ if $D$ contains an edge $uv$.
The \emph{outneighborhood}
$\outneighbor(u)$ of a vertex $u\in V(D)$ is the set of all outneighbors of $u$,
i.e. $\outneighbor(u)=\{v\in V(D)\::\: uv\in E(D)\}$.
The \emph{outdegree} of a vertex $u$, denoted by $\deg^+(u)$, is the number of its outneighbors, i.e. $\deg^+(u)=|\outneighbor(u)|$.
For a set $U\subseteq V(D)$,
the \emph{common outneighborhood} of $U$ is the set of the common outneighbors of the vertices contained in $U$,
i.e.  $\outneighbor(U)=\bigcap_{u\in U}\outneighbor(u)$.
A digraph $D$ is \emph{outregular}
if all vertices of $D$ have the same outdegree.
If $D$ is a digraph, we write $\delta^+(D)$ for the minimum outdegree of a vertex of $D$, i.e. $\delta^+(D)=\min_{u\in V(D)}\deg^+(u)$.

\subsection{Flags}
A thorough introduction to flag algebras can be found in~\cite{FlagAlgebras}.
Here, we present those concepts needed in our proof of Theorem~\ref{thm:main}.
We follow the notation as used in~\cite{FlagAlgebras} but
we restrict our presentation to the case of triangle-free digraphs.
So, we will be dealing (using the language from~\cite{FlagAlgebras})
with the theory $\mathcal T$ of triangle-free digraphs, which is
a vertex-uniform theory with amalgamation property.
The former means that there exists a unique (up to isomorphism) one-vertex digraph and
the latter represents the fact that union of two triangle-free digraphs is a triangle-free digraph.

We will introduce an algebra with addition and multiplication
on formal linear combinations of unrooted and rooted digraphs,
which we will refer to as flags.
In the case of rooted digraphs, we want to refer to the subgraph
induced by the roots as a type. Formally,
a \emph{type} of order $k$ is a triangle-free $k$-vertex
digraph $\sigma$ on the vertex set $V(\sigma)=[k]$.
We will write $|\sigma|$ for the order of $\sigma$, i.e. $|\sigma|=k$.
A \emph{$\sigma$-flag} is a pair $F=(D,\theta)$
where $D$ is a triangle-free digraph and $\theta:[k]\rightarrow
V(D)$ is an isomorphism of $\sigma$ and $D[\Image(\theta)]$.
A particular example of a $\sigma$-flag is the flag comprised of $\sigma$ and the identity mapping on $[k$];
slightly abusing the notation, we will use $\sigma$ for this $\sigma$-flag.
Since we think of $\sigma$-flags as rooted at $\sigma$,
we sometimes refer to the vertices of $\Image(\theta)$ as to the roots.

We now define two different notions of a restriction: a restriction of
a $\sigma$-flag and a restriction of a type.
A \emph{restriction} of a $\sigma$-flag $F=(D,\theta)$ to a set $U\subseteq V(D)$ such that $\Image(\theta)\subseteq U$
is the $\sigma$-flag $(D[U],\theta)$ which will be denoted by $F|_{U}$.
A \emph{restriction} of a type $\sigma$ of order $k$ for an injective map $\eta:[k']\to [k]$
is the type $\sigma_{\eta}$ with vertex set $[k']$ and with $ij$ being an edge iff $\eta(i)\eta(j)$ is an edge in $\sigma$.
In particular, $(\sigma,\eta)$ is a $\sigma_{\eta}$-flag of order $|\sigma|$.

Suppose that $\sigma$ is a type of order $k$.
We write $\mathcal{F}^\sigma$ for the set of all
$\sigma$-flags and $\mathcal{F}^\sigma_\ell$ for those of order $\ell$.
We will consider two $\sigma$-flags $F_1=(D_1,\theta_1)$ and $F_2=(D_2,\theta_2)$ to be \emph{isomorphic}
if there exists their isomorphism $f:V(D_1)\to V(D_2)$ that is an identity on $\sigma$,
i.e., the restriction of $f$ to $\Image(\theta_1)$ is $\theta_2\circ\theta_1^{-1}$.
If two $\sigma$-flags $F_1$ and $F_2$ are isomorphic, we write $F_1\cong_\sigma F_2$.
As a slight extension of this notation,
we will use $\mathcal{F}$ for the set of all digraphs,
$\mathcal{F}_\ell$ for the set of all digraphs of order $\ell$, and
$F_1\cong F_2$ to denote that $F_1$ and $F_2$ are isomorphic.
To ease our way of expressing,
we will also think of $\mathcal{F}$ and $\mathcal{F}_\ell$ as
of $\mathcal{F}^\sigma$ and $\mathcal{F}^\sigma_\ell$ for the empty type $\sigma$.

\subsection{Frequently used flags}

We now introduce notation for the most frequently used flags.
The notation is illustrated in Figure~\ref{fig:wp}.
For depicting digraphs (also see Figure~\ref{fig:nonedges} for illustration),
we use solid line arrows to show oriented edges, and dashed lines to depict their absence.
When two vertices are not connected by an arrow or a dashed line in a figure,
the pair is connected with a grey solid line.
This represents that the pair should be expanded into a formal sum of three flags (non-edge and the two orientations of an edge).
A dashed arrow in a figure represents a formal sum of two flags with a non-edge and an edge in the opposite direction of the arrow.

\begin{figure}
  \centering
\includegraphics[scale=0.92]{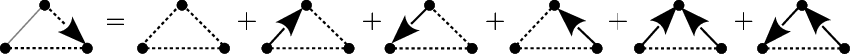}
\caption{Example of usage of dashed arrows and grey solid lines.}
  \label{fig:nonedges}
\end{figure}

The symbol $\lambda$ denotes the unique type of order one.
As explained earlier, we will also use $\lambda$ for the $\lambda$-flag $(\lambda,{\rm id})$.
The digraph consisting of a single directed edge is $\varrho$.
The $\lambda$-flags obtained from $\varrho$ by labelling the tail and the head
are denoted by $\alpha$ and $\bar\alpha$, respectively.
The type consisting of a single directed edge is $\beta$.

The $\lambda$-flag consisting of two vertices, one of them being the root, is $\gamma$.
The \emph{fork}, which is denoted by $\kappa$, is the digraph that consists of three vertices $a,b,c$ and two edges $ab$ and $ac$.
The vertex $a$ is called the \emph{center} of $\kappa$.
When the fork is rooted at its center, it becomes a $\lambda$-flag denoted by $\Fork$.

\begin{figure}
  \centering
\includegraphics[scale=0.92]{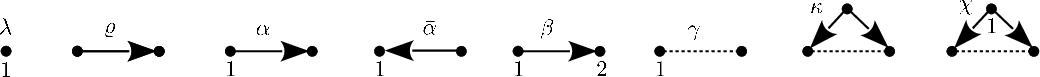}
\caption{Frequently used types and flags.}
  \label{fig:wp}
\end{figure}

\subsection{Flag algebras}
We shall now enhance $\calF^\sigma$ with the structure of an algebra.
The motivation for the definitions now presented becomes clear in the next subsection
where we introduce the convergence of $\sigma$-flags.
To give at least a partial motivation for the definitions we now present,
let us say that the structure we define on $\calF^{\sigma}$
should behave consistently with the probabilities of seeing the $\sigma$-flags involved in a large $\sigma$-flag.

If $F'$ and $F$ are two digraphs of orders $\ell'\le\ell$, respectively, then
$$p(F';F)=\probability\left[F[{\mathbf U}]\cong F'\right]$$ where $\mathbf U$ is a random $\ell'$-element subset of $V(F)$.
Note that we use bold letters to denote random objects following the notation used e.g. in~\cite{FlagAlgebras}.
The definition can be extended to $\sigma$-flags by picking a random subset of non-root vertices.
Formally, 
if $F=(D,\theta)\in \calF^\sigma_\ell$ and $F'\in\calF^\sigma_{\ell'}$ are two $\sigma$-flags, $\ell'\le \ell$,
we define the quantity $p(F';F)$ by
$$p(F';F)=\probability\left[F|_{\Image(\theta)\cup\mathbf V}\cong_\sigma F'\right]\;,$$
where $\mathbf V$ is an
$(\ell'-|\sigma|)$-element subset of $V(D) \setminus \Image(\theta)$ taken uniformly at random.
Note that this is consistent with viewing $\mathcal{F}$ and $\mathcal{F}_\ell$ as
$\mathcal{F}^\sigma$ and $\mathcal{F}^\sigma_\ell$ with $\sigma$ being the empty type.
For completeness, we define $p(F';F)$ to be zero if the order of $F'$ is larger than the order of $F$.
This allows us to view the values $p(F';F)$ for a fixed $\sigma$-flag $F$ as a vector indexed by $\mathcal F^\sigma$:
so, we define $p^F$ to be the vector from $[0,1]^{\mathcal F^\sigma}$ such that $p^F_{F'}=p(F';F)$.

The following chain rule follows directly from the definition (cf.~\cite[Lemma~2.2]{FlagAlgebras}).
\begin{lemma}\label{lem:chainrule}
Let $\sigma$ be a (possibly empty) type and
let $\ell'\le\tilde\ell\le\ell$, $F\in\mathcal F^\sigma_\ell$ and $F'\in\mathcal F^\sigma_{\ell'}$.
It holds that
$$p(F';F)=\sum_{\tilde F\in\mathcal F^\sigma_{\tilde\ell}}p(F';\tilde F)p(\tilde F;F)\;.$$
\end{lemma}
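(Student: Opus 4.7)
The plan is a two-stage uniform sampling argument, essentially the law of total probability. Write $F=(D,\theta)$ and $k=|\sigma|$, so $p(F';F)$ is defined via a uniformly random $(\ell'-k)$-subset $\mathbf V$ of $V(D)\setminus\Image(\theta)$. The key observation is that drawing $\mathbf V$ directly is equidistributed with the following two-stage procedure: first draw a uniformly random $(\tilde\ell-k)$-subset $\tilde{\mathbf V}\subseteq V(D)\setminus\Image(\theta)$, then draw a uniformly random $(\ell'-k)$-subset $\mathbf V\subseteq\tilde{\mathbf V}$. This is the standard fact that the uniform distribution on $(\ell'-k)$-subsets factors through any intermediate size, verified by a one-line count of chains $\mathbf V\subseteq\tilde{\mathbf V}$ of the appropriate sizes.

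With this sampling, I would condition on the $\sigma$-flag isomorphism class $\tilde F\in\mathcal F^\sigma_{\tilde\ell}$ of $F|_{\Image(\theta)\cup\tilde{\mathbf V}}$. By the definition of $p(\tilde F;F)$, the marginal probability that $F|_{\Image(\theta)\cup\tilde{\mathbf V}}\cong_\sigma\tilde F$ equals exactly $p(\tilde F;F)$. Conditional on this event, the induced sub-flag $F|_{\Image(\theta)\cup\mathbf V}$ is precisely the restriction of $\tilde F$ to $\Image(\theta)$ together with a uniformly random $(\ell'-k)$-subset of the $\tilde\ell-k$ non-labelled vertices of $\tilde F$; hence the conditional probability that $F|_{\Image(\theta)\cup\mathbf V}\cong_\sigma F'$ equals $p(F';\tilde F)$ by the very definition of that quantity. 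Summing the joint probabilities over $\tilde F\in\mathcal F^\sigma_{\tilde\ell}$ yields the claimed identity.

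The only genuinely subtle point is that the conditional probability in the second step depends only on the isomorphism class $\tilde F$ and not on the particular embedding $\tilde{\mathbf V}\hookrightarrow V(D)$; this is forced because $p(F';\tilde F)$ is by construction an invariant of the $\sigma$-flag isomorphism type, and because restriction of flags is transitive, so that $(F|_{\Image(\theta)\cup\tilde{\mathbf V}})|_{\Image(\theta)\cup\mathbf V}=F|_{\Image(\theta)\cup\mathbf V}$. Beyond this, nothing specific to the theory $\mathcal T$ of triangle-free digraphs is used; the argument works in any setting closed under induced substructures.
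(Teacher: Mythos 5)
Your proof is correct and is essentially the argument the paper has in mind: the paper gives no proof of its own, remarking only that the lemma ``follows directly from the definition'' and citing Razborov's Lemma~2.2, whose proof is exactly this two-stage uniform sampling / law-of-total-probability argument. The one subtle point — that the conditional probability given $\tilde{\mathbf V}=W$ depends only on the $\sigma$-isomorphism class of $F|_{\Image(\theta)\cup W}$, by isomorphism-invariance of $p(F';\cdot)$ and transitivity of restriction — is correctly identified and handled.
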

Informally speaking, Lemma~\ref{lem:chainrule} says that the probabilities of seeing a $\sigma$-flag of order $\ell'$
can be computed from those of seeing $\sigma$-flag of order $\tilde\ell$ for some $\tilde\ell>\ell'$.
This leads as to the definition of an algebra $\mathcal A^\sigma$ that follows.
We consider the $\mathcal K^\sigma$ of the space $\mathbb R\mathcal F^\sigma$ of finite
formal linear combinations of $\sigma$-flags that is generated by the combinations of the form
\begin{equation}\label{eq:factor}F'-\sum_{\tilde
F\in\mathcal F^\sigma_{\tilde \ell}}p(F';\tilde F)\tilde
F\;,
\end{equation}
for all $\sigma$-flag $F'\in \mathcal F^\sigma_{\ell'}$ and all pairs $\ell'$ and $\tilde\ell$ such that $\ell'\le\tilde\ell$.
We then set $\mathcal A^\sigma=\mathbb R\mathcal F^\sigma/\mathcal K^\sigma$.
The factor-space $\mathcal A^\sigma$ inherits the additive structure from $\mathbb R\mathcal F^\sigma$.
In what follows,
we identify the elements of $\mathbb R\mathcal F^\sigma$ with their classes in $\mathcal A^\sigma$,
i.e.~when we speak about the $\sigma$-flag $F$ as an element of $\mathcal A^\sigma$,
we mean the class $F+\mathcal K^\sigma$.

We now aim at defining the product operation on $\mathcal A^\sigma$.
The motivation again comes from the definition of convergence given in the next subsection.
If $F_1\in\mathcal F^\sigma_{\ell_1}, F_2\in\mathcal F^\sigma_{\ell_2}$ and $F\in\mathcal F^\sigma_{\ell}$, $\ell\ge \ell_1+\ell_2-|\sigma|$
are three $\sigma$-flags, we define the quantity $p(F_1,F_2;F)$ to be
$$p(F_1,F_2;F)=\probability\left[F|_{\Image(\theta)\cup\mathbf V_1}\cong_\sigma F_1\mbox{ and }F|_{\Image(\theta)\cup\mathbf V_2}\cong_\sigma F_1\right]\;.$$
where $(\mathbf V_1, \mathbf V_2)$ is a pair of
disjoint subsets of $V(D) \setminus \Image(\theta)$ of cardinalities $\ell_1-|\sigma|$ and
$\ell_2-|\sigma|$, respectively, drawn uniformly at random from the space of all such pairs.
This definition allows us to define a bilinear mapping 
$\cdot\:: \mathcal F^\sigma\otimes\mathcal F^\sigma\to\mathbb R\mathcal F^\sigma$ as
$$F_1\cdot F_2=\sum_{F\in\mathcal F^\sigma_\ell}p(F_1,F_2;F)F$$
where $F_1\in\mathcal F^\sigma_{\ell_1}, F_2\in\mathcal F^\sigma_{\ell_2}$ and $F\in\mathcal F^\sigma_{\ell}$, $\ell\ge \ell_1+\ell_2-|\sigma|$.
The mapping $\cdot$ can be extended by linearity to $\mathbb R\mathcal F^\sigma\otimes\mathbb R\mathcal F^\sigma$.
It can be shown~\cite{FlagAlgebras} that $\mathcal K^\sigma$ defines a congruence with respect to this mapping and
the mapping $\cdot$ gives a well-defined multiplication operation in $\mathcal A^\sigma=\mathbb R\mathcal F^\sigma/\mathcal K^\sigma$.
The unit element with respect to the multiplication is the $\sigma$-flag $\sigma$ (recall that we identify the elements
$\mathbb R\mathcal F^\sigma$ with their classes in $\mathcal A^\sigma$).

\subsection{Convergence}
The notions presented in this subsection provide motivation for the definitions we have introduced earlier.
Fix a type $\sigma$.
A sequence of $\sigma$-flags $\{F_n\}_{n=1}^\infty$ \emph{converges} to a point $x\in[0,1]^{\mathcal F^\sigma}$
if the sequence $\{p^{F_n}\}_{n=1}^\infty$ converges to $x$ in the product topology on $[0,1]^{\mathcal{F}^\sigma}$.
The vector $x$ gives rise to a mapping $\Psi:\mathcal A^\sigma\to\mathbb R$
defined by $\Psi(F):=x_F$ for $F\in\mathcal{F}^\sigma$ and extended linearly to $\mathcal A^\sigma$.
It can be shown~\cite{FlagAlgebras} that if the orders of $F_n$ grow to infinity,
then the mapping $\Psi$ is a homomorphism from $\mathcal A^\sigma$ to $\mathbb R$.
We then write $\lim\limits_{n\to\infty} F_n=\Psi$.

Let $\Hom(\mathcal A^\sigma,\mathbb R)$ be the set of all algebra homomorphisms from $\mathcal A^\sigma$ to $\mathbb R$ and
let $\Hom^+(\mathcal A^\sigma,\mathbb R)\subseteq \Hom(\mathcal A^\sigma,\mathbb R)$ be those
homomorphisms $\Psi$ such that $\Psi(F)\ge 0$ for every $F\in\mathcal F^\sigma$.
Note that the homomorphism $\Psi$ defined in the previous paragraph belongs to $\Hom^+(\mathcal A^\sigma,\mathbb R)$.
This correspondence goes both ways as stated in the next theorem (cf.~\cite[Theorem~2.5]{LovSze06},~\cite[Theorem~3.3]{FlagAlgebras}).
\begin{theorem}\label{thm:limitsexist}
Let $\sigma$ be a type.
For every $\Psi\in \Hom^+(\mathcal A^\sigma,\mathbb R)$,
there exists a sequence of $\sigma$-flags $\{F_n\}_{n=1}^\infty$ with growing orders that converges and
$\lim\limits_{n\to\infty} F_n=\Psi$.

On the other hand, if $\{F_n\}_{n=1}^\infty$ is a sequence of $\sigma$-flags with orders growing to infinity,
then there exists a subsequence $\{F_{n_i}\}_{i=1}^\infty$ of the sequence $\{F_n\}_{n=1}^\infty$ that
converges and $\lim\limits_{n\to\infty} F_{n_i}\in\Hom^+(\mathcal A^\sigma,\mathbb R)$.
\end{theorem}
Note that a particular corollary of Theorem~\ref{thm:limitsexist} is that
$\Psi(F)\in [0,1]$ for every $\Psi\in\Hom^+(\mathcal A^\sigma,\mathbb R)$ and every $F\in\mathcal F^\sigma$.

We now aim to define a partial order $\le_\sigma$ on $\mathcal A^\sigma$ to compare ``densities'' in convergent sequences of $\sigma$-flags.
If $a,b\in\mathcal A^\sigma$, then $a\le_\sigma b$ iff
$\Psi(a)\le\Psi(b)$ for every $\Psi\in\Hom^+(\mathcal A^\sigma,\mathbb R)$.
Observe that
if $a,b\in\mathcal A^\sigma$ are such that
$b-a=\sum_{F\in\mathcal F^\sigma}c_F F$ with all $c_F\in\mathbb R$ being nonnegative, then $a\le_\sigma b$.

\subsection{Random homomorphisms and averaging}
In the previous subsection,
we have associated every convergent sequence $(D_n)_{n=1}^\infty$ of triangle-free digraphs
with a homomorphism $\Psi\in\Hom^+(\mathcal A,\mathbb R)$.
We now associate it with a probability distribution $\probability^{\sigma}$
on homomorphisms from $\Hom^+(\mathcal A^\sigma,\mathbb R)$ for non-empty types $\sigma$.
Fix a type $\sigma$ of order $k$ such that $\Psi(\sigma)>0$.
Every mapping $\theta:[k]\to V(D_n)$ such that $\theta$ is an isomorphism from $\sigma$ to $D_n[\Image(\theta)]$
yields a $\sigma$-flag, which is $(D_n,\theta)$, and
it consequently leads to a mapping from $\mathcal A^\sigma$ to $\mathbb R$, which is $p^{(D_n,\theta)}$.
By choosing the mapping $\theta$ uniformly at random among all injective mappings from $[k]$ to $V(D_n)$ such that
$\theta$ is an isomorphism from $\sigma$ to $D_n[\Image(\theta)]$,
we obtain a probability distribution on mappings from $\mathcal A^\sigma$ to $\mathbb R$.
Note that we obtain one probability distribution on mappings from $\mathcal A^\sigma$ to $\mathbb R$ for each $n\in\mathbb N$.
It can be shown (for the natural notion of convergence) that these probability distributions on mappings
from $\mathcal A^\sigma$ to $\mathbb R$ converge to a probability distribution $\probability^{\sigma}$ on $\Hom^+(\mathcal A^\sigma,\mathbb R)$.
The rest of this subsection is devoted to formalizing the connection between
the homomorphism $\Psi\in\Hom^+(\mathcal A,\mathbb R)$ and the distributions $\probability^{\sigma}$.

Fix a type $\sigma$ of order $k$ and its restriction $\sigma_0=\sigma|_{\eta}$ of order $k'$ (recall that $\eta:[k']\to[k]$).
The \emph{unlabelling} of a $\sigma$-flag $F=(D,\theta)$ is the $\sigma_0$-flag $F|_{\eta}=(D,\theta\circ\eta)$.
Let $\boldsymbol{\theta}':[k]\rightarrow V(D)$ be an injective extension of
the map $\theta\circ\eta:[k']\rightarrow V(D)$ taken uniformly at random among all such injective extensions.
The quantity $q_{\sigma,\eta}(F)$ is the probability that $(D,\boldsymbol{\theta}')$ and $F$ are isomorphic $\sigma$-flags.
The \emph{averaging operator} $\lsem \rsem_{\sigma,\eta}:\mathcal A^\sigma\rightarrow \mathcal A^{\sigma_0}$ is the linear extension
of the map defined on $\mathcal F^\sigma$ as
$$\lsem F\rsem_{\sigma,\eta}=q_{\sigma,\eta}(F)F|_{\eta}\;.$$
When $\eta$ is the null mapping, i.e. $k'=0$, we write  $\lsem \rsem_{\sigma}$ instead of $\lsem \rsem_{\sigma,\eta}$ for brevity.
In the case of $k'=1$, we also write $\lsem \rsem_{\sigma,m}$ instead of $\lsem \rsem_{\sigma,\eta}$ where $m=\eta(1)$.

We further develop the correspondence from the first paragraph of this subsection,
which corresponds to the arguments given below for $\eta$ being the null mapping.
Recall that a type $\sigma$ of order $k$ and its restriction $\sigma_0=\sigma|_{\eta}$ of order $k'$ are fixed.
For a homomorphism $\Psi\in\Hom^{+}(\mathcal A^{\sigma_0},\mathbb R)$ with $\Psi((\sigma,\eta))>0$,
we say that the probability distribution $\probability^{\sigma,\eta}$ on sets of $\Hom^+(\mathcal A^{\sigma},\mathbb R)$
\emph{extends} the homomorphism $\Psi$ if
$$\int_{\Hom^+(\mathcal
A^{\sigma},\mathbb
R)}\Psi(f)\probability^{\sigma,\eta}(d\Psi)=\frac{\Psi(\lsem f\rsem_{\sigma,\eta})}{\Psi(\lsem
\sigma\rsem_{\sigma,\eta})}$$ for all $f\in\mathcal
A^\sigma$. Theorem 3.5 from~\cite{FlagAlgebras} asserts that an extension always exists and it is unique.
\begin{theorem}
Let $\sigma$ be a type of order $k$ and let $\sigma_0=\sigma|_{\eta}$ be its restriction.
For every homomorphism $\Psi\in\Hom^{+}(\mathcal A^{\sigma_0},\mathbb R)$ with $\Psi((\sigma,\eta))>0$,
there exists a unique probability distribution $\probability^{\sigma,\eta}$ on $\Hom^+(\mathcal A^{\sigma},\mathbb R)$ that extends $\Psi$.
\end{theorem}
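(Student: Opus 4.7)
My plan is to extract $\probability^{\sigma,\eta}$ as a weak limit of the distributions of random rootings along a convergent sequence of $\sigma_0$-flags approximating $\rho$. By the second half of Theorem~\ref{thm:limitsexist}, I pick a sequence of growing $\sigma_0$-flags $(D_n,\theta_n)$ converging to $\rho$. For each $n$ I let $\boldsymbol{\theta}'_n\colon[k]\to V(D_n)$ be a uniformly random injection extending $\theta_n\circ\eta$, and let $\mathcal E_n$ be the event that $(D_n,\boldsymbol{\theta}'_n)\in\mathcal F^\sigma$, i.e., that the random extension actually realises $\sigma$ on the new labels. A direct count identifies $\probability[\mathcal E_n]$ with (a normalised version of) $p((\sigma,\eta);(D_n,\theta_n))$, which converges to a positive multiple of $\rho((\sigma,\eta))>0$, so conditioning on $\mathcal E_n$ makes sense for large $n$.

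Conditionally on $\mathcal E_n$, the random $\sigma$-flag $(D_n,\boldsymbol{\theta}'_n)$ yields a random point $p^{(D_n,\boldsymbol{\theta}'_n)}\in[0,1]^{\mathcal F^\sigma}$; this defines a Borel probability measure $\mu_n$ on the compact metrisable space $[0,1]^{\mathcal F^\sigma}$. By Prokhorov's theorem I pass to a subsequence along which $\mu_n$ converges weakly to a probability measure $\mu$. Since a convergent sequence of $\sigma$-flags can have its limit only in $\Hom^+(\mathcal A^\sigma,\mathbb R)$, and this set is closed in $[0,1]^{\mathcal F^\sigma}$, the support of $\mu$ lies inside $\Hom^+(\mathcal A^\sigma,\mathbb R)$, so $\mu$ is the candidate $\probability^{\sigma,\eta}$.

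It remains to verify the defining integral identity. By linearity of both sides in $f$ I may assume $f=F'\in\mathcal F^\sigma$. The key algebraic identity, obtained by swapping the order of the two independent random choices (the extension $\boldsymbol{\theta}'_n$ and the subset $\mathbf V$ used to compute $p(F';\cdot)$), reads
\[
\expectation\bigl[p(F';(D_n,\boldsymbol{\theta}'_n))\,\mathbf 1_{\mathcal E_n}\bigr] \;=\; p\bigl(\lsem F'\rsem_{\sigma,\eta};(D_n,\theta_n)\bigr)+o(1),
\]
with the analogous statement $\probability[\mathcal E_n]=p(\lsem 1_\sigma\rsem_{\sigma,\eta};(D_n,\theta_n))+o(1)$. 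Taking $n\to\infty$, applying convergence of $(D_n,\theta_n)$ to $\rho$, and using weak convergence of $\mu_n$ to $\mu$ against the continuous coordinate function $\rho'\mapsto\rho'(F')$, I arrive at $\int\rho'(F')\,\mu(d\rho')=\rho(\lsem F'\rsem_{\sigma,\eta})/\rho(\lsem 1_\sigma\rsem_{\sigma,\eta})$, as required.

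The main obstacle is the combinatorial bookkeeping in the displayed identity: one has to match the probability that a random extension realises the $\sigma$-labelling (together with a subsequent random $\mathbf V$-sample yielding $F'$) against the normalising factor $q_{\sigma,\eta}(F')$ that is built into the averaging operator, while carefully accounting for the difference between sampling with versus without replacement, which produces the $o(1)$ term. A secondary but routine point is extending from the basis flags to all of $\mathcal A^\sigma$, which requires knowing that the integral in the statement is well-defined on the quotient algebra; this follows because the relations in $\mathcal K^\sigma$ defined by \eqref{eq:factor} are precisely those that hold for every $\rho'\in\Hom^+(\mathcal A^\sigma,\mathbb R)$, so both sides of the identity are already invariant under them.
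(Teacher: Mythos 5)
The paper does not include a proof of this theorem: it is quoted from Razborov's flag-algebras paper~\cite{FlagAlgebras} as part of the background survey in Section~\ref{sec:FlagAlgebras}, so there is no in-paper argument to compare against. Your sketch reproduces the standard construction of $\probability^{\sigma,\eta}$ as a weak limit of the empirical distributions of random rootings of a convergent sequence of $\sigma_0$-flags, which is exactly the route taken in~\cite{FlagAlgebras}, and the overall structure of the argument is sound.

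A few points deserve tightening. Your step ``a convergent sequence of $\sigma$-flags can have its limit only in $\Hom^+(\mathcal{A}^\sigma,\mathbb{R})$'' is only valid for sequences whose orders tend to infinity (a constant sequence $F,F,\ldots$ is a counterexample, since $p^F$ fails $\sum_{\tilde F\in\mathcal{F}^\sigma_\ell}p^F(\tilde F)=1$ for $\ell>|F|$); your construction uses growing flags, so the conclusion holds, but the growth hypothesis is load-bearing and should be stated. Moreover, passing from this to ``the support of $\mu$ lies inside $\Hom^+$'' uses the Portmanteau-derived inclusion $\mathrm{supp}(\mu)\subseteq\bigcap_N\overline{\bigcup_{n\ge N}\mathrm{supp}(\mu_n)}$ together with a diagonalization to extract convergent flag subsequences with growing orders, and ultimately rests on the quantitative fact that the points $p^F$ violate the multiplicativity relations defining $\Hom^+$ by an error that is $O(1/|V(F)|)$ uniformly; it is worth saying this explicitly rather than gesturing at closedness. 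Two smaller points: the displayed identity actually holds with equality, not merely up to $o(1)$, because the joint law of $\bigl(\Image(\boldsymbol{\theta}'_n)\setminus\Image(\theta_n\eta),\,\mathbf{V}\bigr)$ is that of a uniform ordered pair of disjoint subsets, which factors cleanly through first sampling the union and then partitioning it, so the normalizing factor $q_{\sigma,\eta}(F')$ emerges exactly; and ``the relations in $\mathcal{K}^\sigma$ are \emph{precisely} those that hold for every $\rho'\in\Hom^+$'' overstates what is needed --- the sufficient (and easy) direction is that every element of $\mathcal{K}^\sigma$ vanishes under every $\rho'\in\Hom^+$, while the converse is a separate, harder theorem of Razborov that is not required here.
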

If $\Psi\in\Hom^{+}(\mathcal A^{\sigma_0},\mathbb R)$ is fixed,
then the \emph{random homomorphism rooted at $\sigma$} is a random homomorphism given by the unique distribution $\probability^{\sigma,\eta}$
that extends the homomorphism $\Psi$.
The random homomorphism rooted at $\sigma$ is denoted by $\boldsymbol{\Psi}^{\sigma,\eta}$.
It follows from the definition of the extension that
\begin{equation}\label{eq:Razb21}
\expectation[\boldsymbol{\Psi}^{\sigma,\eta}(f)]=\frac{\Psi(\lsem
f\rsem_{\sigma,\eta})}{\Psi(\lsem \sigma\rsem_{\sigma,\eta})}\;,
\end{equation}
for all $f\in\mathcal A^\sigma$.
If $\eta$ is the null mapping, we will often drop it from the superscript.
Using the just introduced terminology,
it can be shown that the distributions $\probability^{\sigma}$ on $\Hom^+(\mathcal A^{\sigma},\mathbb R)$ as defined in the first paragraph
extend the homomorphism $\Psi\in\Hom^+(\mathcal A,\mathbb R)$ associated with the convergent sequence $(D_n)_{n=1}^\infty$ of digraphs.

\subsection{Minimum outdegree}
The notion of a random homomorphism leads to a natural definition of
the \emph{minimum outdegree} $\delta_\alpha(\Psi)$ of a homomorphism $\Psi\in\Hom^+(\mathcal A,\mathbb R)$.
This is defined by
$$\delta_\alpha(\Psi)=\sup\{a\::\:\probability[\boldsymbol{\Psi}^{\lambda}(\alpha)<a]=0\}\;.$$
It is not true that if a sequence $\{D_n\}_{n=1}^\infty$ of digraphs converges to $\Psi$,
then $$\lim_{n \to \infty}
\delta^+(D_n)/|V(D_n)|=\delta_\alpha(\Psi)\;\mbox{.}$$
For example,
if $D_n$ consists of a single isolated vertex and a digraph formed by four sets of $n$ vertices $U_1$, $U_2$, $U_3$ and $U_4$
with edges going from $U_i$ to $U_{i+1}$ (indices modulo four),
then the sequence $\{D_n\}_{n=1}^\infty$ converges,
$\delta^+(D_n)=0$ for every $n$ and $\delta_\alpha(\Psi)=1/4$ for the limit homomorphism $\Psi$.
However, the converse is true:
if all digraphs $D_n$ have large minimum outdegree, then $\delta_\alpha(\Psi)$ for the limit homomorphism $\Psi$ is also large~\cite{FlagAlgebras}.
\begin{theorem}\label{thm:mindegreesequences}
Suppose that $\{D_n\}_{n=1}^\infty$ is a sequence which converges to $\Psi$.
Then $$\delta_\alpha(\Psi)\ge\liminf_{n \to \infty}
\frac{\delta^+(D_n)}{|V(D_n)|}\;.$$
\end{theorem}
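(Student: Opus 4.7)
The plan is to fix $a < d := \liminf_{n\to\infty} \delta^+(D_n)/|V(D_n)|$, prove $\delta_\alpha(\phi)\ge a$, and let $a\nearrow d$. Writing $X := \rhoone(\alpha)$, this reduces to showing $\probability[X < a] = 0$. I will establish this by the method of moments, matching the moments of $X$ with those of the normalised outdegree distribution of~$D_n$.

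After passing to a tail of the sequence I may assume $\deg^+(v) \ge a\,n_*$ for every vertex $v\in V(D_n)$ and every $n$, where $n_* := |V(D_n)|$. Let $\mathbf v_n$ be a uniformly random vertex of $D_n$ and set $Z_n := \deg^+(\mathbf v_n)/n_*$, so that $Z_n\in[a,1]$. A short calculation using the definition of the product in $\mathcal A^1$ gives
$$p^{(D_n,v)}(\alpha^k) \;=\; \binom{\deg^+(v)}{k}\,\Big/\,\binom{n_*-1}{k},$$
which differs from $(\deg^+(v)/n_*)^k$ by $O_k(1/n_*)$ uniformly in $v$. Combining this with the averaging identity $p^{D}(\lsem f\rsem_1) = \expectation_{\mathbf v}\bigl[p^{(D,\mathbf v)}(f)\bigr]$ and the definition of $\phi = \lim_n p^{D_n}$ yields
$$\phi(\lsem \alpha^k\rsem_1) \;=\; \lim_{n\to\infty}\expectation\bigl[Z_n^k\bigr] \qquad\text{for every } k\ge 0.$$

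On the other hand, equation~\eqref{eq:Razb21} applied with $f=\alpha^k$, together with $\lsem 1_1\rsem_1 = 1$ and the fact that $\rhoone$ is almost surely a homomorphism (so that $\rhoone(\alpha^k) = \rhoone(\alpha)^k = X^k$), gives $\expectation[X^k] = \phi(\lsem \alpha^k\rsem_1)$. Hence $\expectation[X^k] = \lim_n\expectation[Z_n^k]$ for every $k\ge 0$. Since $X$ lies in $[0,1]$ almost surely (because $\Hom^+(\mathcal A^1,\mathbb R)\subseteq[0,1]^{\mathcal F^1}$) and the laws of the $Z_n$ are tight on $[a,1]$, any weak subsequential limit $\mu$ of these laws is supported on the closed interval $[a,1]$ and has the same moments as~$X$. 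Uniqueness for the Hausdorff moment problem on a bounded interval then forces the law of $X$ to equal $\mu$, so $X\in[a,1]$ almost surely and $\probability[X < a] = 0$.

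The hard part is the bookkeeping in the first step: correctly identifying $\phi(\lsem \alpha^k\rsem_1)$ with the $n\to\infty$ limit of the averaged outdegree moments requires unwinding several layers of flag-algebra notation. Once that is done, the conclusion is a textbook moment-matching argument, with the pointwise inequality $Z_n\ge a$ pushed through to the weak limit via the portmanteau property of closed sets.
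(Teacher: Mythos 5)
Your proposal is correct, and in fact proves something a bit stronger: it identifies the law of $\boldsymbol{\phi}^1(\alpha)$ with the weak limit of the normalised outdegree distributions of the $D_n$. The paper itself does not supply a proof of Theorem~\ref{thm:mindegreesequences}; it cites it to Razborov's flag-algebra paper. Your method-of-moments argument is a clean, self-contained route to the statement and is consistent with the ingredients used elsewhere in the paper (it is essentially the same circle of ideas behind Lemma~\ref{lem:limitoutdegDistr} and~\cite[Theorem~3.12]{FlagAlgebras}). I checked the steps: the exact formula $p^{(D_n,v)}(\alpha^k)=\binom{\deg^+(v)}{k}/\binom{n_*-1}{k}$ is right after expanding $\alpha^k$ in $\calF^1_{k+1}$; the averaging identity $p^{D}(\lsem f\rsem_1)=\expectation_{\mathbf v}[p^{(D,\mathbf v)}(f)]$ holds (this is the unlabelling identity for $k'=0$, with $\lsem 1_1\rsem_1=1$); applying~\eqref{eq:Razb21} with $\sigma=1$, $\sigma_0=0$ and using that $\boldsymbol{\phi}^1$ lands in $\Hom^+(\mathcal A^1,\mathbb R)$ almost surely gives $\expectation[X^k]=\phi(\lsem\alpha^k\rsem_1)$; and determinacy of the Hausdorff moment problem on $[0,1]$ together with the portmanteau bound for the closed set $[a,1]$ gives $\probability[X<a]=0$, after which $a\nearrow d$ and right-continuity of the distribution function finish the argument.

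Two cosmetic points. First, you write $X:=\rhoone(\alpha)$ where you clearly mean $X:=\boldsymbol{\phi}^1(\alpha)$; the macro $\rhoone$ in the paper denotes $\boldsymbol{\rho}^1$ for the specific extremal homomorphism $\rho$, not the limit $\phi$ of your arbitrary sequence. Second, you silently use $|V(D_n)|\to\infty$ to make the $O_k(1/n_*)$ error vanish; this is fine because the limit $\phi$ is assumed to be an algebra homomorphism (otherwise $\delta_\alpha(\phi)$ is undefined), and $p^{D}$ for a fixed finite $D$ is never multiplicative, so a convergent sequence with bounded orders cannot converge to a homomorphism. Worth stating, but not a gap.
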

As discussed above, the converse of Theorem~\ref{thm:mindegreesequences} need not hold in general.
However, a weaker statement is true:
for every homomorphism $\Psi$ with large minimum outdegree,
there exists a sequence convergent to $\Psi$ with large minimum outdegree.
In fact, a sequence of digraphs $D_n$ where $D_n=F\in\mathcal F_n$ with probability $\Psi(F)$
converges with probability one and it has the desired property with probability one (cf.~\cite[Section~2.6]{LovSze06}).
\begin{theorem}\label{thm:mindegreeconvergence}
For every $\Psi\in\Hom^+(\mathcal A^\sigma,\mathbb R)$,
there exists a sequence $\{D_n\}_{n=1}^\infty$ of digraphs that
converges to $\Psi$, and such that
$$\lim_{n \to \infty}
\frac{\delta^+(D_n)}{|V(D_n)|}=\delta_\alpha(\Psi)\;.$$
\end{theorem}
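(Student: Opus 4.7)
The plan is to modify an arbitrary sequence converging to $\phi$ by surgically removing a vanishing fraction of vertices of abnormally small outdegree. Apply Theorem~\ref{thm:limitsexist} to fix a sequence $\{E_n\}_{n=1}^\infty$ of triangle-free digraphs with $E_n\to\phi$ and $|V(E_n)|\to\infty$; write $\delta=\delta_\alpha(\phi)$ and $n=|V(E_n)|$. For a sequence $\epsilon_n\downarrow 0$ to be specified, set $L_n=\{v\in V(E_n):\deg^+(v)<(\delta-\epsilon_n)n\}$ and $D_n=E_n\setminus L_n$. Once $|L_n|=o(n)$ is guaranteed, a uniformly random $\ell$-subset of $V(E_n)$ lies inside $V(D_n)$ with probability $1-o(1)$, giving $|p(F;E_n)-p(F;D_n)|\to 0$ for every fixed flag $F$, and hence $D_n\to\phi$.

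The technical heart is to show that the empirical distribution $\mu_n$ of $\deg^+(v)/n$ on $V(E_n)$ converges weakly to the law $\mu$ of $\boldsymbol{\rho}^1(\alpha)$. I would establish this by matching $k$-th moments for every $k\ge 1$: by~(\ref{eq:Razb21}), $\expectation[\boldsymbol{\rho}^1(\alpha)^k]=\phi(\lsem\alpha^k\rsem_1)$ (using that $\phi(\lsem 1_1\rsem_1)=1$ and that $\boldsymbol{\rho}^1$ is an algebra homomorphism), while $\int x^k\,d\mu_n(x)$ differs by $O(1/n)$ from $p(\lsem\alpha^k\rsem_1;E_n)$, which in turn converges to $\phi(\lsem\alpha^k\rsem_1)$ because $E_n\to\phi$. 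Since $\mu_n$ and $\mu$ are supported in $[0,1]$, matching of all moments forces weak convergence. By the very definition of $\delta$, $\mu([0,a])=0$ for every $a<\delta$, so every such $a$ is a continuity point of the cumulative distribution function of $\mu$ and $\mu_n([0,a])\to 0$. A standard diagonal argument then yields $\epsilon_n\downarrow 0$ with $|L_n|/n\to 0$, and every surviving $v$ satisfies $\deg^+_{D_n}(v)\ge(\delta-\epsilon_n)n-|L_n|=(\delta-o(1))|V(D_n)|$.

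For the matching upper bound, maximality of $\delta$ gives $\mu([0,\delta+\epsilon])>0$ for every $\epsilon>0$, so choosing $\epsilon$ at a continuity point of $\mu$ (possible since the atoms are countable) and using weak convergence yields $\liminf_n\mu_n([\delta-\epsilon_n,\delta+\epsilon])\ge\mu([0,\delta+\epsilon])>0$. For all large $n$ some vertex of $E_n$ therefore has outdegree in $[(\delta-\epsilon_n)n,(\delta+\epsilon)n]$; such a vertex is not in $L_n$, so it survives into $D_n$, giving $\delta^+(D_n)\le(\delta+\epsilon)n=(\delta+\epsilon)(1+o(1))|V(D_n)|$. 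Letting $\epsilon\downarrow 0$ yields $\limsup_n\delta^+(D_n)/|V(D_n)|\le\delta$; combined with the lower bound above this proves $\lim_n\delta^+(D_n)/|V(D_n)|=\delta$. The main obstacle is the moment-matching step: recognising $\int x^k\,d\mu_n(x)$ as a concrete flag density in $E_n$ requires carefully unpacking $\alpha^k\in\mathcal{A}^1$ via the definition of the product in the algebra and then tracking the combinatorial normalising factors of the averaging operator $\lsem\,\rsem_1$, so that the abstract quantity $\phi(\lsem\alpha^k\rsem_1)$ is genuinely the limit of $k$-th empirical degree moments.
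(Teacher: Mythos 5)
Your proof is correct but follows a genuinely different path from the paper's. The paper's proof (sketched in the sentences immediately following the theorem) samples $D_n$ at random from a distribution given by $\phi$, following the Lov\'asz--Szegedy random-sampling construction for graph limits, and then asserts that both $D_n\to\phi$ and $\delta^+(D_n)/|V(D_n)|\to\delta_\alpha(\phi)$ hold almost surely, with the (nontrivial) concentration calculations omitted. You instead start from an \emph{arbitrary} converging sequence $\{E_n\}$ supplied by Theorem~\ref{thm:limitsexist} and perform a deterministic surgery: delete the $o(n)$ vertices of abnormally small outdegree. The core of your argument is that the empirical outdegree distribution $\mu_n$ converges weakly to the law $\mu$ of $\boldsymbol{\rho}^1(\alpha)$, established by matching all moments via the identities $\expectation[\boldsymbol{\rho}^1(\alpha)^k]=\phi(\lsem\alpha^k\rsem_1)$ and $\int x^k\,d\mu_n = p(\lsem\alpha^k\rsem_1;E_n)+O(1/n)$, where the latter uses that $\alpha^k$ expands into the sum of all $\sigma$-flags in $\calF^1_{k+1}$ whose root has full outdegree $k$; the $[0,1]$-support then makes the Hausdorff moment problem determinate. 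From there the lower bound on $\delta^+(D_n)$ is automatic, and the upper bound follows by locating a surviving vertex whose normalized outdegree lies in $[\delta-\epsilon_n,\delta+\epsilon]$ using $\mu([\delta,\delta+\epsilon))>0$ (which is what maximality in the definition of $\delta_\alpha$ gives) together with weak convergence at a continuity point. Your route avoids the random model entirely and is more self-contained; as a bonus it proves a two-sided weak-convergence statement that strengthens Lemma~\ref{lem:limitoutdegDistr}, which the paper only needs one-sidedly. The paper's route is shorter on paper because it delegates to known sampling results, at the cost of an appeal to a.s.\ concentration estimates that are less visibly routine. The one step you flag as delicate --- unpacking $\alpha^k$ and the normalizing factor of $\lsem\,\rsem_1$ --- is indeed the only place where detail is missing, but it is a standard flag-algebra computation and not a gap in the argument.
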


We now relate the outdegree distribution of a convergent sequence $\{D_n\}_{n=1}^\infty$ of digraphs and
the associated homomorphism $\Psi\in\Hom^+(\mathcal A,\mathbb R)$.
If $D$ is a digraph and $c\in [0,1]$, then
$$S(D,c):=\frac{|\{v\in V(D)\::\:\deg^+(v)\le cn\}|}{n}\;\mbox{.}$$
Recall there exists a unique distribution $\Psione$ on $\Hom^+(\mathcal A^\lambda,\mathbb R)$ that extends $\Psi$.
A consequence of \cite[Theorem~3.12]{FlagAlgebras} is the following.
\begin{lemma}\label{lem:limitoutdegDistr}
Let $\{D_n\}_{n=1}^\infty$ be a convergent sequence of digraphs with $\lim_{n\to\infty} D_n=\Psi$.
It holds that
$$\probability[\boldsymbol{\Psi^{\lambda}}(\alpha)\le c]\ge \liminf_{n\rightarrow\infty} S(D_n,c)\;.$$
\end{lemma}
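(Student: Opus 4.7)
\medskip

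\noindent\textbf{Proof plan.} Set $n=|V(D_n)|$ and consider the random variable $X_n=\deg^+(\mathbf v)/n$, where $\mathbf v$ is a uniformly random vertex of $D_n$, so that $S(D_n,c)=\probability[X_n\le c]$. Let $Y=\boldsymbol{\phi^{1}}(\alpha)$, viewed as a $[0,1]$-valued random variable distributed according to the measure $\probability^{1,\eta}$ extending $\phi$ (with $\eta:\emptyset\to[1]$ the unique such map). My plan is to show that $X_n$ converges to $Y$ in distribution and then to apply the Portmanteau theorem to the closed set $[0,c]\subseteq[0,1]$: this gives
$$\probability[Y\le c]\;\ge\;\limsup_{n\to\infty}\probability[X_n\le c]\;\ge\;\liminf_{n\to\infty}S(D_n,c),$$
which is the claimed inequality.

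Since $X_n$ and $Y$ take values in the compact interval $[0,1]$, the method of moments reduces weak convergence to showing $\expectation[X_n^k]\to\expectation[Y^k]$ for every integer $k\ge 0$; the Weierstrass approximation theorem then extends this to all continuous test functions. For the right-hand side, identity \eqref{eq:Razb21} applied to $f=\alpha^k\in\mathcal A^1$, together with $\phi(\lsem 1_1\rsem_1)=\phi(1_0)=1$ and the fact that $\boldsymbol{\phi^{1}}$ is an algebra homomorphism, yields $\expectation[Y^k]=\expectation[\boldsymbol{\phi^{1}}(\alpha^k)]=\phi(\lsem\alpha^k\rsem_1)$. For the left-hand side, I expand $(\deg^+(v)/n)^k$ as the probability that $k$ uniform i.i.d.\ vertices of $D_n$ are all out-neighbors of $v$; switching from ``with replacement'' to ``without replacement'' costs $O(1/n)$ uniformly in $v$, and after averaging over $\mathbf v$ the quantity $\expectation[X_n^k]$ equals $p^{D_n}(\lsem\alpha^k\rsem_1)+O(1/n)$. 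Since $\lsem\alpha^k\rsem_1$ is a fixed element of $\mathcal A^0$ and $\lim D_n=\phi$ in the product topology, $p^{D_n}(\lsem\alpha^k\rsem_1)\to\phi(\lsem\alpha^k\rsem_1)$, completing the moment identification.

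The hard part will be the bookkeeping step $\expectation[X_n^k]=p^{D_n}(\lsem\alpha^k\rsem_1)+O(1/n)$: one must carefully compare ordered $k$-tuples of out-neighbors allowing repetition with ordered $k$-tuples of distinct vertices, observing that collision tuples contribute only $O(n^{k-1})$ out of $n^k$, and then verify that applying the averaging operator $\lsem\cdot\rsem_1$ to the flag product $\alpha^k$ indeed reproduces the vertex-average $\frac{1}{n}\sum_{v}p^{(D_n,v)}(\alpha^k)$ up to the same $O(1/n)$ error. Both are routine but slightly tedious. Alternatively, the lemma can be obtained as a direct specialization of \cite[Theorem~3.12]{FlagAlgebras}, which packages exactly this moment calculation in the general setting; the authors cite that theorem explicitly, so the written proof will probably just unpack the relevant instance and append the Portmanteau step above.
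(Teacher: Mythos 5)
The paper itself gives no proof of this lemma --- it simply states ``As a consequence of [Theorem~3.12]{FlagAlgebras} we get the following'' --- so there is no written argument to compare against. Your proposal is a correct reconstruction of exactly what is hidden behind that citation: Razborov's Theorem~3.12 packages the weak convergence of the empirical rooted distributions to $\boldsymbol{\phi^{\sigma,\eta}}$, and once you have weak convergence of $X_n=\deg^+(\mathbf v)/|V(D_n)|$ to $Y=\boldsymbol{\phi^1}(\alpha)$, the Portmanteau theorem applied to the closed set $[0,c]$ gives
$$\probability[Y\le c]\;\ge\;\limsup_n\probability[X_n\le c]\;\ge\;\liminf_n S(D_n,c),$$
which is even marginally stronger than the stated bound. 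Your method-of-moments route to the weak convergence is sound: $[0,1]$-valued distributions are moment-determinate (Hausdorff), the identity $\expectation[Y^k]=\phi(\lsem\alpha^k\rsem_1)$ follows from \eqref{eq:Razb21} together with $\phi(\lsem 1_1\rsem_1)=1$ and the fact that $\boldsymbol{\phi^1}$ is a homomorphism, and $p^{D_n}(\lsem\alpha^k\rsem_1)\to\phi(\lsem\alpha^k\rsem_1)$ because $\lsem\alpha^k\rsem_1$ is a \emph{fixed} finite linear combination of $0$-flags.

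One small remark on your ``bookkeeping step'': the identity $p^{D_n}(\lsem f\rsem_1)=\frac1n\sum_v p^{(D_n,v)}(f)$ is in fact \emph{exact} (for $f$ a linear combination of $1$-flags of order $\le|V(D_n)|$), not merely correct up to $O(1/n)$. Picking a uniform $(k{+}1)$-subset and then a uniform root in it is the same distribution as picking a uniform root and then a uniform $k$-subset of the remaining vertices, and the normalizing factor $q_{1}(F)$ is precisely the conditional probability of landing on an isomorphic labelling once the unlabelled subflag is fixed. The only genuine $O(1/n)$ error is the with/without-replacement correction $(\deg^+(v)/n)^k-\binom{\deg^+(v)}{k}/\binom{n-1}{k}=O(1/n)$, uniform in $v$. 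With that simplification noted, your proof closes cleanly and is essentially the argument the authors are invoking.
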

The inequality in Lemma~\ref{lem:limitoutdegDistr} can be strict:
let $D_n$ be a digraph formed by two sets of $n$ and $n+1$ vertices with all edges going from the smaller set to the larger one.
Then $S(D_n,1/2)<1/2$, but $\probability[\boldsymbol{\Psi^{\lambda}}(\alpha)\le 1/2]=1$.

\subsection{Cauchy--Schwarz inequality}
One of the frequently used tools in extremal combinatorics
is the Cauchy--Schwarz Inequality. Recently, Lov\'asz and
Szegedy~\cite{LovSz09} made progress on
formalizing its importance in the context of extremal graph
theory. They have shown that every linear inequality between
subgraph densities that holds asymptotically for all
graphs can be approximated (with arbitrary
precision) by finitely many applications of the Cauchy--Schwarz Inequality.
However, it might not be possible to prove it exactly as shown by Hatami and the third author~\cite{HN:Undecide}.
The Cauchy--Schwarz Inequality reads in the language of the flag algebras as
follows (cf.~\cite[Theorem~3.14]{FlagAlgebras}).
\begin{theorem}\label{thm:CauchySchwarz}
If $\sigma$ is a type and $\sigma_0=\sigma|_{\eta}$ is one of its restrictions,
it holds that
$$\Psi\left(\lsem f^2\rsem_{\sigma,\eta}\right)\ge 0$$
for all $f\in\mathcal A^\sigma$ and all $\Psi\in \Hom^+(\mathcal A^{\sigma_0},\mathbb R)$.
\end{theorem}
The proof of Theorem~\ref{thm:CauchySchwarz} follows the next lines.
Observe that $\Psi'(f^2)=\Psi'(f)^2\ge 0$ for every $\Psi'\in\Hom^+(\mathcal A^{\sigma},\mathbb R)$ and every $f\in\mathcal A^\sigma$.
If $\Psi((\sigma,\eta))>0$,
then $\boldsymbol{\Psi^{\sigma,\eta}}(f^2)\ge 0$ and so
$\Psi(\lsem f^2\rsem_{\sigma,\eta})=\Psi(\lsem \sigma\rsem_{\sigma,\eta})\expectation[\boldsymbol{\Psi}^{\sigma,\eta}(f^2)]\ge 0$.
If $\Psi((\sigma,\eta))=0$, then $\Psi(\lsem f\rsem_{\sigma,\eta})=0$ for every $f\in F^{\sigma}$ and the statement is trivial.

\subsection{Inductive arguments}
To formalize inductive arguments in the language of flag algebras,
Razborov~\cite{FlagAlgebras} introduces  the notion of an \emph{upward operator}.
We only use two special instances of this operator which we now define.

Let $\sigma$ be a type of order $k$ and let $\sigma_0=\sigma|_{\eta}$ be one of its restrictions of order $k'$.
For a $\sigma$-flag $F=(D,\theta)$, we define
$F\downarrow_{\eta}$ to be the $\sigma_0$-flag obtained from $F$ by deleting the vertices corresponding to $\sigma$ but not to $\sigma_0$,
i.e.,
$$F\downarrow_{\eta}:=F|_{\eta}\setminus\theta([k]\setminus\Image(\eta))\;.$$
The operator $\pi^{\sigma,\eta}: \mathcal{A}^{\sigma_0} \to \mathcal{A}^{\sigma}$ is defined by its action on $\mathcal F^{\sigma|_{\eta}}$
as follows $$\pi^{\sigma,\eta}(F)=\sum_{\substack{\hat{F} \in F^{\sigma} \\ \hat{F}\downarrow_{\eta}=F}} \hat F\;,$$
for $F \in \mathcal{F}^{\sigma_0}$.
The following properties of $\pi^{\sigma,\eta}$ were established in~\cite[Theorem~3.18, Corollary~3.19, Remark~5]{FlagAlgebras}.
\begin{theorem}\label{thm:piproperties}
Let $\sigma_2$ be a type of order $k_2$,
let $\sigma_1=\sigma_2|_{\eta_{21}}$ be one of its restrictions of order $k_1\le k_2$, and
let $\sigma_0=\sigma_1|_{\eta_{10}}$ be one of the restrictions of $\sigma_1$ of order $k_0\le k_1$,
i.e. $\sigma_0=\sigma_2|_{\eta_{20}}$ where $\eta_{20}=\eta_{21}\circ\eta_{10}$.
Suppose that $\Psi \in \Hom(\mathcal{A}^{\sigma_0},\mathbb{R})$ is a homomorphism such that $\Psi((\sigma_2, \eta_{20})) > 0$.
\begin{enumerate}
  \item[a)] For every $f \in \mathcal{A}^{\sigma_0}$ we have
  $$\probability[\boldsymbol{\Psi^{\sigma_1,\:\eta_{10}}}(\pi^{\sigma_1,\eta_{10}}(f))=\Psi(f)]=1\;.$$
  \item[b)] For every $f \in \mathcal{A}^{\sigma_1}$ we have
$$\probability[\boldsymbol{\Psi^{\sigma_1,\:\eta_{10}}}(f)=0]=1
\quad\Rightarrow\quad
\probability[\boldsymbol{\Psi^{\sigma_2,\:\eta_{20}}}(\pi^{\sigma_2,\eta_{21}}(f))=0]=1\;.$$
\end{enumerate}
\end{theorem}
Note that the assumption $\Psi((\sigma_2, \eta_{20})) > 0$ in Theorem~\ref{thm:piproperties} implies that $\Psi((\sigma_1, \eta_{10})) > 0$.

The second operator we define we refer to as the \emph{replication operator}.
Let $\sigma$ be a type of order $k$,
let $\eta:[k-1] \to [k]$ be an injective mapping, and
let $\sigma'$ be a flag of order $k'$.
Further, let $i_0$ be the unique integer contained in $[k]\setminus\Image(\eta)$.
We will define a flag $\sigma'\wr(\sigma,\eta)$ of order $k+k'-1$.
The flag $\sigma'\wr(\sigma,\eta)$ is the unique digraph $D$ with vertex-set $[k+k'-1]$ such that
$ij$ is an edge of $D$ if one of the following holds:
\begin{itemize}
\item $i\le k-1$, $j\le k-1$ and $\eta(i)\eta(j)$ is an edge of $\sigma$,
\item $i\le k-1$, $j\ge k$ and $\eta(i)i_0$ is an edge of $\sigma$,
\item $i\ge k$, $j\le k-1$ and $i_0\eta(j)$ is an edge of $\sigma$, or
\item $i\ge k$, $j\ge k$ and $(i-k+1)(j-k+1)$ is an edge of $\sigma'$.
\end{itemize}
In other words, the vertices $1,\ldots,k-1$ of $\sigma'\wr(\sigma,\eta)$ induce a digraph isomorphic to $\sigma$ restricted to $\Image(\eta)$,
the other $k'$ vertices of $\sigma'\wr(\sigma,\eta)$ are joined to the first $k-1$ vertices as the vertex $i_0$ to the rest of $\sigma$, and
the vertices $k,\ldots,k+k'-1$ induce a digraph isomorphic to $\sigma'$.
The construction is illustrated in Figure~\ref{fig:OPERATION}.
\begin{figure}
  \centering
\includegraphics[scale=1]{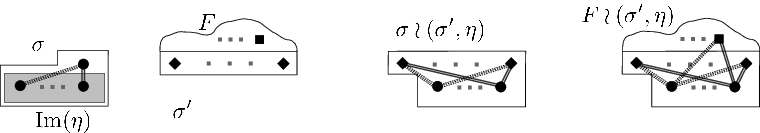}
\caption{The replication operator $\wr$. From the types $\sigma$ and $\sigma'$ and the $\sigma'$-flag $F$ on the left, we get the type $\sigma\wr(\sigma',\eta)$ and the $\sigma\wr(\sigma',\eta)$-flag $F\wr(\sigma',\eta)$ on the right. Two types of lines represent possibly different connection types (non-edges or edges in either direction) in $\sigma$, which are then duplicated in $\sigma\wr(\sigma',\eta)$ and $F\wr(\sigma',\eta)$.}
  \label{fig:OPERATION}
\end{figure}

The definition naturally extends to $\sigma'$-flags.
If $F'$ is a $\sigma'$-flag,
then the $\sigma'\wr(\sigma,\eta)$-flag $F'\wr(\sigma,\eta)$ is the unique $\sigma'\wr(\sigma,\eta)$-flag $F=(D,\theta)$ with $|F'|+k-1$ vertices 
such that $F\downarrow_{\eta'}=F'$ where $\eta':[k']\to [k+k'-1]$ such that $\eta'(x):=x+k-1$, and the remaining edges $uu'$ of $F$ are defined as follows:
\begin{itemize}
\item if $u\in\theta([k-1])$ and $u'\in\theta([k-1])$, then $uu'$ is an edge iff $\theta^{-1}(u)\theta^{-1}(u')$ is an edge of $\sigma$,
\item if $u\in\theta([k-1])$, $u'\not\in\theta([k-1])$, then $uu'$ is an edge iff $\theta^{-1}(u)i_0$ is an edge of $\sigma$, and
\item if $u\not\in\theta([k-1])$, $u'\in\theta([k-1])$, then $uu'$ is an edge iff  $i_0\theta^{-1}(u')$ is an edge of $\sigma$.
\end{itemize}
Again, the vertices of $\theta([k-1])$ induce a digraph isomorphic to $\sigma$ restricted to $\Image(\eta)$,
the other vertices of $F$ induce the $\sigma'$-flag $F'$ (after a suitable relabeling), and
they are joined to the vertices in $\theta([k-1])$ as the vertex $i_0$ to the rest of $\sigma$.
A linear mapping $\pi^{\wr(\sigma,\eta)}_{\sigma'}:\mathbb{R}\mathcal{F}^{\sigma'} \to \mathbb{R}\mathcal{F}^{\sigma'\wr(\sigma,\eta)}$
is defined by setting $\pi^{\wr(\sigma,\eta)}_{\sigma'}(F')=F'\wr(\sigma,\eta)$ and linearly extending.
Note that $\mathcal{K}^{\sigma'}$ does not necessarily lie in the kernel of $\pi^{\wr(\sigma,\eta)}_{\sigma'}$.
As before, if $\sigma'$ is the empty type, we just write $\pi^{\wr(\sigma,\eta)}$.

Fix a type $\sigma$ of order $k$ and an injective mapping $\eta:[k-1]\to [k]$.
For a homomorphism $\Psi \in \Hom^+(\mathcal{A}^{\sigma|_{\eta}},\mathbb{R})$,
we define $\Psi^{\wr(\sigma,\eta)}:\mathcal{A}\to\mathbb{R}$ as
\begin{equation}
\Psi^{\wr(\sigma,\eta)}(F):=\left\{
\begin{array}{cl}
\Psi(\pi^{\wr(\sigma,\eta)}(F))/(\Psi((\sigma,\eta)))^{|F|} & \mbox{if $\Psi((\sigma,\eta))\not=0$, and} \\
0 & \mbox{otherwise.}
\end{array}
\right.
\label{eq-defwr}
\end{equation}
The next theorem, which follows from~\cite[Theorems 2.6 and 4.1]{FlagAlgebras}, asserts that
the just defined mapping $\Psi^{\wr(\sigma,\eta)}$ must be a homomorphism of $\mathcal{A}$ to $\mathbb{R}$,
in particular, it is well-defined.
\begin{theorem}\label{thm:piveeproperties}
Let $\sigma$ be a type of order $k$ and let $\eta:[k-1] \to [k]$ be an injective map.
\begin{enumerate}
\item[a)] For every $\Psi \in \Hom^+(\mathcal{A}^{\sigma|_{\eta}},\mathbb{R})$ we have
  $$\Psi^{\wr(\sigma,\eta)}\in \Hom^+(\mathcal{A},\mathbb{R})\;.$$
\item[b)] For every $\Psi \in \Hom^+(\mathcal{A}^{\sigma|_{\eta}},\mathbb{R})$, type
  $\sigma'$ and  $f \in \mathbb{R}\mathcal{F}^{\sigma'}_{\ell}$, $\ell\in\mathbb{N}$,  we  have
  $$\probability\left[\boldsymbol{\Psi^{\sigma'\wr(\sigma,\eta),\;\eta}}\left(\pi^{\wr(\sigma,\eta)}_{\sigma'}(f)\right) \geq 0\right]=1 \quad\Rightarrow \quad\probability\left[\boldsymbol{(\Psi^{\wr(\sigma,\eta)})}^{\boldsymbol{\sigma'}}(f) \geq 0\right]=1\;.$$
\end{enumerate}
\end{theorem}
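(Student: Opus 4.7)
The plan is to follow \cite[Theorems~2.6 and 4.1]{FlagAlgebras} and establish both parts by matching the algebraic definition of $\phi^{\wr(\sigma,\eta)}$ with a concrete probabilistic picture of ``conditioning on the extension type $(\sigma,\eta)$ at every unlabelled vertex.''

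For part~(a), I will realize $\phi^{\wr(\sigma,\eta)}$ as the limit of an explicit sequence of triangle-free digraphs. Using Theorem~\ref{thm:limitsexist}, fix $\{(D_n,\theta_n)\}\to\phi$ and let $D_n^{\star}$ denote the unlabelled subdigraph of $D_n$ induced on
$$V_n^{\star}=\{v\in V(D_n)\setminus\Image(\theta_n)\;:\;(D_n|_{\Image(\theta_n)\cup\{v\}},\theta_n)\cong_{\sigma|_{\eta}}(\sigma,\eta)\}.$$
For any flag $F\in\mathcal F^0_\ell$, a direct density computation shows that the $F$-density inside $D_n^{\star}$ equals, in the limit, the $(F\wr(\sigma,\eta))$-density inside $(D_n,\theta_n)$ divided by the appropriate power of the asymptotic proportion $|V_n^{\star}|/|V(D_n)\setminus\Image(\theta_n)|\to\phi((\sigma,\eta))$, which recovers precisely $\phi^{\wr(\sigma,\eta)}(F)$ as defined. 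Consequently $\phi^{\wr(\sigma,\eta)}=\lim_{n}D_n^{\star}$, which by Theorem~\ref{thm:limitsexist} lies in $\Hom^+(\mathcal A^0,\mathbb R)\subseteq\Hom(\mathcal A^0,\mathbb R)$. (Multiplicativity of $\phi^{\wr(\sigma,\eta)}$ is thus automatic, but can also be verified directly from the product formula in $\mathcal A^0$ together with the observation that $\pi_{\sigma'}^{\wr(\sigma,\eta)}$ is multiplicative up to the normalization factor.)

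For part~(b), the strategy is to couple the two random homomorphisms $\boldsymbol{\phi^{\sigma'\wr(\sigma,\eta),\eta}}$ and $\boldsymbol{(\phi^{\wr(\sigma,\eta)})^{\sigma'}}$ on a common probability space in such a way that, almost surely,
$$\boldsymbol{\phi^{\sigma'\wr(\sigma,\eta),\eta}}\bigl(\pi_{\sigma'}^{\wr(\sigma,\eta)}(f)\bigr)=\boldsymbol{(\phi^{\wr(\sigma,\eta)})^{\sigma'}}(f)\qquad\text{for every } f\in\mathcal A^{\sigma'}.$$
Intuitively, both sides correspond to choosing a uniformly random $\sigma'$-labelling within the ``conditioned'' subdigraph $V^{\star}$; the two distributions differ only in the order in which ``condition on $(\sigma,\eta)$'' and ``pick the $\sigma'$-labels'' are performed, and a direct inclusion--exclusion argument on flag counts shows that these orders commute on the level of joint distributions. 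Once the coupling is established, almost-sure non-negativity of $\pi_{\sigma'}^{\wr(\sigma,\eta)}(f)$ transfers to almost-sure non-negativity of $f$, which is the claimed implication.

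The main obstacle is the explicit verification of the coupling in part~(b). It requires matching the normalization $\phi((\sigma,\eta))^{l-k+1}$ in the definition of $\phi^{\wr(\sigma,\eta)}$ with the normalizing factor $\rho(\lsem 1_\sigma\rsem_{\sigma,\eta})$ appearing in the definition of a random rooted homomorphism; combinatorially, this reduces to checking that the uniform random $\sigma'\wr(\sigma,\eta)$-labelling of a $\sigma|_\eta$-flag has the same distribution, after forgetting the $\wr(\sigma,\eta)$-decoration, as the uniform random $\sigma'$-labelling of the $\wr(\sigma,\eta)$-conditioned subdigraph. This identification is exactly the content of \cite[Theorem~4.1]{FlagAlgebras}, which we invoke to complete the argument.
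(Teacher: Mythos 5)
The paper does not prove Theorem~\ref{thm:piveeproperties} at all: it is imported verbatim as a black-box consequence of \cite[Theorems~2.6 and 4.1]{FlagAlgebras} (the sentence immediately preceding the theorem reads ``By~\cite[Theorems 2.6 and 4.1]{FlagAlgebras}, we have the following''). So there is no internal proof to compare your attempt against, and your reconstruction of the underlying intuition is welcome but is answering a different question than the paper poses.

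On the merits, your sketch has two gaps worth flagging. First, in part~(a) you realize $\phi^{\wr(\sigma,\eta)}$ as a limit of the ``conditioned'' digraphs $D_n^\star$ via Theorem~\ref{thm:limitsexist}. That compactness statement only supplies approximating sequences for elements of $\Hom^{+}(\mathcal{A}^{\sigma|_\eta},\mathbb{R})$, whereas the theorem as stated quantifies over all of $\Hom(\mathcal{A}^{\sigma|_\eta},\mathbb{R})$; moreover, if $\phi((\sigma,\eta))=0$ the map $\phi^{\wr(\sigma,\eta)}$ is declared to be identically zero, which your density argument does not address (and which in fact cannot arise as a limit of digraphs, since it does not send $1_0$ to $1$). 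Your argument therefore proves a $\Hom^{+}$ version of~(a), which is the version actually used in the paper (for $\rho^1$), but it does not match the literal statement. You should also double-check the normalizing exponent: a set of $l$ unlabelled vertices must all have extension type $(\sigma,\eta)$, so the heuristic suggests dividing by $\phi((\sigma,\eta))^{l}$, not $\phi((\sigma,\eta))^{l-k+1}$; reconciling your density computation with the displayed formula is not a triviality and deserves to be spelled out. Second, in part~(b) the entire content is the asserted coupling of $\boldsymbol{\phi^{\sigma'\wr(\sigma,\eta),\eta}}\circ\pi^{\wr(\sigma,\eta)}_{\sigma'}$ with $\boldsymbol{(\phi^{\wr(\sigma,\eta)})^{\sigma'}}$, and you ultimately invoke \cite[Theorem~4.1]{FlagAlgebras} to supply it. That means the argument is circular as a proof of the present theorem: the theorem being proved \emph{is} the cited result. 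If you want a self-contained treatment you would need to construct the coupling directly (e.g.\ via the extension measures $\probability^{\sigma,\eta}$ and an explicit chain-rule computation matching $q_{\cdot,\cdot}$-normalizations on both sides), rather than appeal to Razborov; if you are content to cite Razborov, then the detour through $D_n^\star$ and the coupling heuristic should be presented as motivation rather than proof.
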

To avoid ambiguity, we remark that
$\boldsymbol{(\Psi^{\wr(\sigma,\eta)})}^{\boldsymbol{\sigma'}}$ in Theorem~\ref{thm:piveeproperties}b) stands
for $\boldsymbol{\Phi}^{\boldsymbol{\sigma'}}$ where $\Phi=\Psi^{\wr(\sigma,\eta)}$.

\begin{remark} As an alternative to deriving Theorem~\ref{thm:piveeproperties} from~\cite[Theorems 2.6 and 4.1]{FlagAlgebras}, which are technical and are stated in the model theoretic language, one can give a direct ad hoc proof of Theorem~\ref{thm:piveeproperties}. 

For the interested reader let us, however, outline the derivation of Theorem~\ref{thm:piveeproperties} a) from the results in~\cite{FlagAlgebras}. The setting we are working in is essentially described in~\cite[Section 2.3.2]{FlagAlgebras}. We apply~\cite[Theorems 2.6]{FlagAlgebras}  to the open interpretation (as defined in~\cite[Definition 3]{FlagAlgebras}) $(U,I): T \rightsquigarrow T$, where $T$ is the theory of triangle-free digraphs. The formula $U$ represents the diagram of the flag $(\sigma,\eta) \in \mathcal{F}^{\sigma|_{\eta}}_k$ and $I$ acts identically. By~\cite[Theorems 2.6]{FlagAlgebras} applied with  $\sigma_1$ being the empty type  and $\sigma_2=\sigma|_\eta$, the map $\pi^{(U,I)}: \mathcal{A} \to  \mathcal{A}^{\sigma|_\eta}_u$, defined by linearly extending $\pi^{(U,I)}(F) = \pi^{\wr(\sigma,\eta)}(F)/(\sigma,\eta)^{|F|}$ for $F \in \mathcal{F}$, is an algebra homomorphism. We have
$\boldsymbol{\Psi^{\wr(\sigma,\eta)}} =  \boldsymbol{\Psi}\circ\pi^{(U,I)}$, implying 
Theorem~\ref{thm:piveeproperties} a).
\end{remark}

\section{Structure of triangle-free digraphs}\label{sec:Struct}
We start with several observations which will later allow
us to restrict our attention only to special classes of
digraphs.
\begin{obs}\label{obs:outregularity}
Let $D$ be a triangle-free digraph with $\delta^+(D)\ge k$. Then there exists a triangle-free digraph $D'$ on the same vertex set with outdegree of every vertex equal to $k$.
\end{obs}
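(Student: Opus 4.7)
The statement is essentially a one-line reduction, so the plan is very short. The key observation is that deleting arcs from a triangle-free digraph cannot create a triangle (since any triangle in the new digraph would already be present in the original), and that deleting an outgoing arc at $v$ only decreases $\deg^+(v)$ while leaving every other vertex's outdegree unchanged. Consequently, each vertex can be dealt with independently.

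Concretely, the plan is to start with $D$ and, for every vertex $v$ with $\deg^+(v) > k$, pick an arbitrary subset $S_v \subseteq \outneighbor(v)$ of size $\deg^+(v) - k$ and delete all arcs $vw$ with $w \in S_v$. Let $D'$ be the resulting digraph on the same vertex set $V(D)$. Since $\delta^+(D) \ge k$, for every vertex $v$ the set $\outneighbor(v)$ has at least $k$ elements, so such a subset $S_v$ exists; after the deletion, $\deg^+_{D'}(v) = k$. Vertices originally of outdegree exactly $k$ are untouched. Because the sets of arcs deleted at different vertices are disjoint (each deleted arc has a unique tail), the outdegree of each vertex in $D'$ is exactly $k$.

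It remains to observe that $D'$ is triangle-free. Since $E(D') \subseteq E(D)$, any directed triangle in $D'$ would also be a directed triangle in $D$, contradicting the assumption on $D$. Thus $D'$ is the required triangle-free digraph with every outdegree equal to $k$.

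There is no real obstacle here; the only thing to be careful about is noting that deletions at different vertices do not interfere with each other (so no iterative fixing is needed) and that edge deletion cannot introduce new cycles, both of which are immediate from the definitions.
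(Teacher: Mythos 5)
Your proof is correct and matches the paper's one-line argument exactly: remove $\deg^+(v)-k$ outgoing arcs at each vertex $v$, noting that deleting edges cannot create a triangle. The extra remarks about deletions at distinct tails not interfering are accurate and harmless, but the approach is identical.
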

Indeed, to obtain the digraph $D'$ it suffices to remove 
an arbitrary set of  $\deg^+(v)-k$ edges leaving $v$ for every vertex $v$.

We next show that a possible counterexample to Conjecture~\ref{conj:CH}
would yield counterexamples of arbitrary large order.
Suppose that $D$ is an $n$-vertex triangle-free digraph.
Replace each vertex $v\in V(D)$ by a copy $D_v$ of the digraph $D$ and
every directed edge $uv$ of the original digraph $D$ by
a complete directed bipartite graph from $D_u$ to $D_v$.
This construction yields a digraph $D'$ of order $n^2$.
Observe that $\delta^+(D')= \delta^+(D)(n+1)$ and
that if $D$ is triangle-free, then so is $D'$.
We arrive at the following observation by iterating the construction.

\begin{obs}\label{obs:scaling}
Suppose that there exists a triangle-free $n$-vertex
digraph $D$ with minimum outdegree at least $cn$. Then
for very $m_0$ there exists a triangle-free digraph $D'$ of
order $m>m_0$ with minimum outdegree at least $cm$.
Moreover, if $D$ is outregular, then so is $D'$.
\end{obs}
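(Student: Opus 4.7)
The plan is to iterate the doubling-by-substitution construction $D\mapsto D^{(2)}$ from the paragraph immediately preceding the statement, until the order of the result exceeds $m_0$. Concretely, I would set $D^{(1)}:=D$ and inductively $D^{(t+1)}:=(D^{(t)})^{(2)}$, so that each application squares the vertex count.

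The invariant to verify by induction on $t$ is that $D^{(t)}$ is a triangle-free digraph with $\delta^+(D^{(t)})/|V(D^{(t)})|\ge c$. The base case $t=1$ is simply the hypothesis. For the inductive step, the preceding paragraph already establishes that the construction $X\mapsto X^{(2)}$ preserves triangle-freeness and satisfies $\delta^+(X^{(2)})=\delta^+(X)\cdot|V(X)|$ while $|V(X^{(2)})|=|V(X)|^2$; thus the ratio $\delta^+/|V|$ is preserved \emph{exactly}. Since $|V(D^{(t)})|=n^{2^{t-1}}\to\infty$, for any given $m_0$ I can choose $t$ large enough that $n^{2^{t-1}}>m_0$ and take $D':=D^{(t)}$, which has order $m>m_0$ and satisfies $\delta^+(D')\ge cm$.

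For the moreover clause I would verify separately, again by induction, that the construction preserves outregularity. In $X^{(2)}$ every vertex $w$ lies in some copy $X_v$, and its outdegree splits as the internal contribution coming from $X_v$ plus the complete-bipartite contribution $|V(X)|\cdot|\outneighbor_X(v)|$; if $X$ is outregular both summands depend only on the common outdegree of $X$ and on $|V(X)|$, so $X^{(2)}$ is outregular as well. This property then propagates through the iteration, yielding outregularity of $D'$.

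The proof is essentially a one-line iteration of the already-established single-step construction, so there is no real obstacle; the only point requiring care is to track the right invariant, namely the ratio $\delta^+/|V|$ (which is preserved exactly) rather than the absolute minimum outdegree.
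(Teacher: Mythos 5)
Your proof is correct and is exactly the argument the paper intends: the paper introduces $D^{(2)}$, states its relevant properties, and then simply says ``Using this procedure repeatedly we get the following,'' which is precisely your iteration. Your write-up spells out the induction, the divergence of $n^{2^{t-1}}$, and the outregularity bookkeeping, all of which are routine and correct (note only that with the within-copy edges of $D_v$ included, the minimum outdegree is $\delta^+(D)(|V(D)|+1)$, so the ratio $\delta^+/|V|$ in fact increases slightly rather than being preserved exactly, which of course only helps).
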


\subsection{Caccetta--H\"aggkvist Conjecture in the
language of flag algebras} Theorem~\ref{prop:asymptotic}
below is the main technical result of the paper. It translates
Theorem~\ref{thm:main} into the flag algebra language.
\begin{theorem}\label{prop:asymptotic}
It holds that
$$\max_{\Psi\in\mathrm{Hom}^+(\mathcal{A},\mathbb{R})}\delta_\alpha(\Psi)
< 0.3465\;.$$
\end{theorem}
Theorem~\ref{prop:asymptotic} is proven in the next section.
The maximum in Theorem~\ref{prop:asymptotic} is attained by~\cite[Theorem~3.15]{FlagAlgebras}.
We now demonstrate that it implies Theorem~\ref{thm:main}.
\begin{proof}[Proof of Theorem~\ref{thm:main}]
Suppose that there exists a
triangle-free $n$-vertex digraph $D$ with
$\delta^+(D)=cn$, $c \ge 0.3465$. By
Observation~\ref{obs:outregularity} there exists an
infinite sequence $(D_n)_{n=1}^\infty$ of
triangle-free digraphs with increasing orders such that
the digraph $D_n$ has minimum outdegree at least $c|V(D_n)|$.
By Theorem~\ref{thm:limitsexist},
there exists a subsequence $(D_{n_i})_{i=1}^\infty$ that converges
Let $\Psi\in\mathrm{Hom}^+(\mathcal{A},\mathbb{R})$ such that
$\lim_{i\rightarrow\infty} D_{n_i}=\Psi$.
By Theorem~\ref{thm:mindegreesequences}, we have $\delta_\alpha(\Psi)\ge 0.3465$ which violates Theorem~\ref{prop:asymptotic}.
\end{proof}

\subsection{Non-edges in triangle-free digraphs}
A recent result of Chudnovsky, Seymour and
Sullivan~\cite{ChuSeySull} asserts that one can delete $k$
edges from a triangle-free digraph $D$ with at most $k$
non-edges to make it acyclic. Hamburger, Haxell and
Kostochka~\cite{HHK} used this result to refine a proof of
Shen~\cite{Shen}, and consequently to obtain the previously
best-known bound on the Caccetta--H\"aggkvist conjecture.
They used the following corollary of the theorem of
Chudnovsky, Seymour and Sullivan, see~\cite{HHK} for further details.
\begin{lemma}\label{lem:HHKdegree}
Suppose $D$ is a triangle-free digraph with $k$
non-edges. Then there is a vertex $v\in V(D)$ with
$\deg^+(v)<\sqrt{2k}$.
\end{lemma}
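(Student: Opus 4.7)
The plan is to combine the Chudnovsky--Seymour--Sullivan theorem with an elementary double-counting along a topological ordering. I would first invoke the CSS result to extract a set $F\subseteq E(D)$ of at most $k$ edges whose removal makes $D$ acyclic, and then fix a topological ordering $v_1,\ldots,v_n$ of $D-F$ so that every surviving edge points from a smaller to a larger index.

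The key observation is that the outneighbours of $v_i$ in $D-F$ are contained in $\{v_{i+1},\ldots,v_n\}$, so $\deg^+_{D-F}(v_i)\le n-i$, and hence the number of outedges of $v_i$ that lie in $F$ is at least $\deg^+_D(v_i)-(n-i)$. Summing these nonnegative truncations over all $i$ cannot exceed $|F|\le k$, and this is the inequality that I would exploit.

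I would close the argument by contradiction. Assume every vertex has outdegree at least $d:=\lceil\sqrt{2k}\rceil$ in $D$. Then for each of the $d$ largest indices (those $i$ with $n-i<d$), vertex $v_i$ already contributes at least $d-(n-i)$ to $|F|$, so
\[
k\;\ge\;|F|\;\ge\;\sum_{i=n-d+1}^{n}\bigl(d-(n-i)\bigr)\;=\;1+2+\cdots+d\;=\;\frac{d(d+1)}{2}.
\]
Since $d\ge\sqrt{2k}$ gives $d^2\ge 2k$, one obtains $d(d+1)/2>d^2/2\ge k$, contradicting the preceding display; the boundary case $n<d$ needs no separate treatment, as then every outdegree is at most $n-1<\sqrt{2k}$ directly. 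The only serious ingredient is the CSS feedback-arc-set bound; once it is granted, the remaining step is the arithmetic comparison above, and I do not foresee any real obstacle.
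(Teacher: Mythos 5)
Your proof is correct. A small bookkeeping point: the paper itself does not give a proof of this lemma — it defers to Hamburger, Haxell and Kostochka \cite{HHK} — so there is no in-paper argument to compare against literally. That said, your derivation (remove a feedback arc set $F$ with $|F|\le k$ via Chudnovsky--Seymour--Sullivan, fix a topological order of $D-F$, and count the edges of $F$ forced out of the $d=\lceil\sqrt{2k}\rceil$ last vertices to get $|F|\ge d(d+1)/2>k$) is exactly the natural argument one expects behind this corollary, and every step checks out: each edge of $F$ is charged once at its tail, the truncation $\max(0,\deg^+_D(v_i)-(n-i))$ is a valid lower bound on that charge, the sum over the last $d$ indices is $1+\cdots+d$, and $d(d+1)/2>d^2/2\ge k$ gives the contradiction for $k\ge 1$. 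Your aside about $n<d$ is also fine (then $\deg^+(v)\le n-1\le d-2<\sqrt{2k}$). The only caveat, shared by the lemma as stated rather than introduced by you, is the degenerate case $k=0$, where the conclusion $\deg^+(v)<0$ cannot hold; in the paper's single application (inside Lemma~\ref{lem:HHKdegree2}) the relevant non-edge count is always positive, so this does not matter.
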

Lemma~\ref{lem:HHKdegree} can be stated in the language of flag algebras as follows.
\begin{lemma}\label{lem:HHKdegree2}
For every $\Psi \in \Hom^{+}(\mathcal{A},\mathbb{R})$
 we have for every $\epsilon_0>0$
that
$$\probability\left[\boldsymbol{\Psi^{\lambda}}(\alpha)<\sqrt{1-\Psi(\varrho)}+\epsilon_0\right]>0\;.$$
\end{lemma}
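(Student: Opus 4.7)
The plan is to deduce Lemma~\ref{lem:HHKdegree2} from its combinatorial version (Lemma~\ref{lem:HHKdegree}) by passing to a convergent sequence of finite digraphs and then invoking Lemma~\ref{lem:limitoutdegDistr}. By Theorem~\ref{thm:limitsexist} I would fix a sequence $\{D_n\}_{n=1}^\infty$ of triangle-free digraphs with $\lim_{n\to\infty} D_n=\phi$. Convergence of the density of $\varrho$ implies that the number $k_n$ of non-edges of $D_n$ satisfies $k_n=(1-\phi(\varrho))\binom{|V(D_n)|}{2}+o(|V(D_n)|^2)$, and hence $\sqrt{2k_n}\le\sqrt{1-\phi(\varrho)}\,|V(D_n)|+o(|V(D_n)|)$.

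Set $c:=\sqrt{1-\phi(\varrho)}+\epsilon_0/2$ and $\delta:=\epsilon_0/4$. I would construct greedily a set $S_n=\{v_0,v_1,\ldots,v_{\lfloor\delta|V(D_n)|\rfloor-1}\}\subseteq V(D_n)$ of vertices each having outdegree at most $c|V(D_n)|$ in $D_n$: at step $i$, apply Lemma~\ref{lem:HHKdegree} to the triangle-free digraph $D_n\setminus\{v_0,\ldots,v_{i-1}\}$ to pick a vertex $v_i$ whose outdegree in this subgraph is less than $\sqrt{2k_i}$, where $k_i\le k_n$ is the current non-edge count. Since the outdegree of $v_i$ in $D_n$ exceeds its outdegree in the subgraph by at most the number of previously removed vertices, namely $i<\delta|V(D_n)|$, we obtain $\deg^+_{D_n}(v_i)<\sqrt{2k_n}+\delta|V(D_n)|\le c|V(D_n)|$ for all sufficiently large $n$.

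It then follows that $S(D_n,c)\ge\delta-o(1)$, and Lemma~\ref{lem:limitoutdegDistr} yields $\probability[\boldsymbol{\phi^{1}}(\alpha)\le c]\ge\delta>0$; since $c<\sqrt{1-\phi(\varrho)}+\epsilon_0$, the conclusion of the lemma is immediate.

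The only point that requires attention, and which I expect to be the main technical obstacle, is when $k_i=0$, in which case Lemma~\ref{lem:HHKdegree} does not strictly apply; however the current subgraph is then a triangle-free tournament, hence transitive (any shortest directed cycle of length at least four would be shortened by the chord between its first and third vertices), so it has a sink that may play the role of $v_i$. Apart from this mild case distinction the estimates are routine, and the only substantive flag-algebraic input is the appeal to Lemma~\ref{lem:limitoutdegDistr}.
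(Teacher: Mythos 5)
Your proof takes essentially the same route as the paper's: pass to a convergent sequence via Theorem~\ref{thm:limitsexist}, iterate Lemma~\ref{lem:HHKdegree} to extract a linear-size set of low-outdegree vertices, and transfer to the limit with Lemma~\ref{lem:limitoutdegDistr}. You merely spell out the greedy iteration and the degenerate $k_i=0$ case that the paper compresses into the phrase ``a repeated application of Lemma~\ref{lem:HHKdegree}''.
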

\begin{proof}
Let $\{D_n\}_{n=1}^\infty$ be a sequence of triangle-free digraphs such that
$\lim_{n\to\infty} D_n=\Psi$, which exists by Theorem~\ref{thm:limitsexist}.
The definition of the convergence of a sequence of digraphs yields that
for every $\epsilon>0$, there exists a number $n_0$ such that
every digraph $D_n$ ($n>n_0$) contains at most $(1-\Psi(\varrho)+\epsilon)|V(D_n)|^2/2$ non-edges.
A repeated application of Lemma~\ref{lem:HHKdegree} yields the existence of a set $S_n\subseteq V(D_n)$, $|S_n|\ge \epsilon |V(D_n)|$,
such that $\deg^+(v)\le \left(1-\varepsilon\right)^{-1}\sqrt{1-\Psi(\varrho)+\epsilon}|V(D_n)|$ for every $v\in S_n$.
We conclude using Lemma~\ref{lem:limitoutdegDistr} that $$\probability\left[\boldsymbol{\Psi^{\lambda}}(\alpha)\le
\left(1-\epsilon\right)^{-1}\sqrt{1-\Psi(\varrho)+\epsilon}\right]\ge
\epsilon\;\mbox{.}$$
The statement of the lemma now follows by choosing $\epsilon$ sufficiently small.
\end{proof}

\section{Proof of Theorem~\ref{prop:asymptotic}}\label{sec:ProofofProp}
Fix $\Psi\in \mathrm{Hom}^+(\mathcal{A},\mathbb{R})$
with the maximum value of $\delta_\alpha(\Psi)$ in $\mathrm{Hom}^+(\mathcal{A},\mathbb{R})$
for the rest of the section
(recall that the maximum is attained by~\cite[Theorem~3.15]{FlagAlgebras} as we have already mentioned).
Set $c_0:=\delta_\alpha(\Psi)$. By considering a sequence of
digraphs converging to $\Psi$ and using
Theorems~\ref{thm:limitsexist}
and~\ref{thm:mindegreeconvergence}, and
Observation~\ref{obs:outregularity}, we can assume without loss of generality that
\begin{equation}\label{eq:outdegc}
\probability[\boldsymbol{\Psi^\lambda}(\alpha)=c_0]=1\;.
\end{equation}
We proceed by deriving a series of inequalities on the
values of $\Psi$ in Subsections~\ref{sec:CauchySchwarz}--\ref{sec:Fork}.

We will
concentrate our attention on inequalities which can be
expressed in terms of values of $\Psi$ on the elements of
$\mathcal{F}_{4}$. To be able to write  down these
inequalities we will need to enumerate elements of
$\calF_4$, and also the elements of $\calF^\beta_3$ and
$\calF^\lambda_3$, which is done in the following subsection.

\subsection{Notation}
We now fix notation for the elements of $\calF^\beta_3$, $\calF^\lambda_3$ and $\calF_4$.
The elements of  $\calF^\beta_3$ are denoted by $K_0,\ldots, K_7$.
The vertex set of each of these digraphs is $\{1,2,a\}$, where $1$ and $2$ are the vertices of $\beta$,
and the edge set of each of these digraphs is listed in the table below.
We will write $\overline{K}$ for $( K_0, \ldots, K_7 )\in\left(\mathcal A^{\beta}\right)^8$.

\bigskip
\begin{tabular*}{5cm}{ll}
    $K_0$& $\{12\}$  \\
    $K_1$& $\{12,2a\}$  \\
    $K_2$& $\{12,a2\}$  \\
\end{tabular*}
\begin{tabular*}{5cm}{ll}
    $K_3$& $\{12,1a\}$  \\
    $K_4$& $\{12,1a,2a\}$  \\
    $K_5$& $\{12,1a,a2\}$  \\
\end{tabular*}
\begin{tabular*}{5cm}{ll}
    $K_6$& $\{12,a1\}$  \\
    $K_7$& $\{12,a1,a2\}$  \\
\end{tabular*}

\bigskip

The symbols $L_0,\ldots L_{13}$  denote the elements of $\calF^\lambda_3$
which are considered as digraphs with the vertex set $\{1,a,b\}$, where $1$ is corresponds to the vertex of $\lambda$.
The edge sets are as follows.
\bigskip

\begin{tabular*}{5cm}{ll}
    $L_0$& $\{						\}$  \\
    $L_1$& $\{ ab					\}$  \\
    $L_2$& $\{ 1b					\}$  \\
    $L_3$& $\{ 1b,ab					\}$  \\
    $L_4$& $\{ 1b,ba					\}$  \\
\end{tabular*}
\begin{tabular*}{5cm}{ll}
    $L_5$& $\{ b1			\}$  \\
    $L_6$& $\{ b1,ab			\}$  \\
    $L_7$& $\{ b1,ba			\}$  \\
    $L_8$& $\{ 1a,1b				\}$  \\
    $L_9$& $\{ 1a,1b,ab			\}$  \\
\end{tabular*}
\begin{tabular*}{5cm}{ll}
    $L_{10}$& $\{ 1a,b1				\}$  \\
    $L_{11}$& $\{ 1a,b1,ba				\}$  \\
    $L_{12}$& $\{ a1,b1				\}$  \\
    $L_{13}$& $\{ a1,b1,ab			\}$  \\
\end{tabular*}

\bigskip
\noindent As in the previous case, we will use $\overline{L}$ for $( L_0, \ldots, L_{13})\in(\mathcal{A}^{\lambda})^{14}$.

Finally, we enumerate the elements of $\calF_4$, i.e.
all isomorphism types of triangle-free digraphs on the
vertex set $\{a,b,c,d\}$. The table below gives edge sets
of each of these thirty-two digraphs $H_0,\ldots,H_{31}$.

\bigskip

\begin{tabular*}{5cm}{ll}
    $H_0$& $\{						\}$  \\
    $H_1$& $\{ cd					\}$  \\
    $H_2$& $\{ bd,cd					\}$  \\
    $H_3$& $\{ bd,dc					\}$  \\
    $H_4$& $\{ db,dc					\}$  \\
    $H_5$& $\{ ad,bd,cd				\}$  \\
    $H_6$& $\{ ad,bd,dc				\}$  \\
    $H_7$& $\{ ad,db,dc				\}$  \\
    $H_8$& $\{ da,db,dc				\}$  \\
    $H_9$& $\{ bc,bd,cd				\}$  \\
    $H_{10}$& $\{ ad,bc				\}$  \\
\end{tabular*}
\begin{tabular*}{5cm}{ll}
    $H_{11}$& $\{ ad,bc,cd				\}$  \\
    $H_{12}$& $\{ ad,bc,bd				\}$  \\
    $H_{13}$& $\{ ad,bc,bd,cd			\}$  \\
    $H_{14}$& $\{ ad,bc,bd,dc			\}$  \\
    $H_{15}$& $\{ ad,bc,db				\}$  \\
    $H_{16}$& $\{ ad,bc,db,dc			\}$  \\
    $H_{17}$& $\{ da,bc,bd				\}$  \\
    $H_{18}$& $\{ da,bc,bd,cd			\}$  \\
    $H_{19}$& $\{ da,bc,bd,dc			\}$  \\
    $H_{20}$& $\{ da,bc,db,dc			\}$  \\
    $H_{21}$& $\{ ac,ad,bc,bd			\}$  \\
\end{tabular*}
\begin{tabular*}{5cm}{ll}
    $H_{22}$& $\{ ac,ad,bc,bd,cd		\}$  \\
    $H_{23}$& $\{ ac,ad,bc,db			\}$  \\
    $H_{24}$& $\{ ac,ad,bc,db,dc		\}$  \\
    $H_{25}$& $\{ ac,da,bc,db			\}$  \\
    $H_{26}$& $\{ ac,da,bc,db,dc		\}$  \\
    $H_{27}$& $\{ ac,ad,cb,db,cd		\}$  \\
    $H_{28}$& $\{ ac,da,cb,bd			\}$  \\
    $H_{29}$& $\{ ac,da,cb,db,dc		\}$  \\
    $H_{30}$& $\{ ca,da,cb,db,cd		\}$  \\
    $H_{31}$& $\{ ab,ac,ad,bc,bd,cd     \}$
\end{tabular*}

\bigskip

\noindent If $\Psi\in\Hom^+(\mathcal A,\mathbb R)$,
we will write $\Psi_i$ instead of $\Psi(H_i)$ for brevity.
So, we can view $\overline{\Psi}=(\Psi_i)_{i=0}^{31}$ as an element of $\mathbb{R}^{32}$.

\subsection{Cauchy--Schwarz inequalities}\label{sec:CauchySchwarz}
Let $\overline{a} \in \mathbb{R}^8$ be a (row) vector.
A direct computation gives that $24\Psi(\lsem (\overline{a}\overline{K}^{\top})^2 \rsem_{\beta})
= \overline{a}(A_C(\overline{\Psi}))\overline{a}^{\top}$
where $A_C(\overline{\Psi})$ is the matrix given in Table~\ref{tab:CS};
the entry $A_C(\overline{\Psi})_{ij}$ is $\Psi(24\lsem K_i \cdot K_j \rsem_{\beta})$
computed by expressing $\lsem K_i \cdot K_j \rsem_{\beta} \in \mathcal A$  as a sum of elements of $\calF_4$.

\begin{table}
{\fontsize{5pt}{0.10pt}\selectfont
$$
\arraycolsep=0.3pt
\left[
\begin{array}{cccccccc}
2\Psi_1 + 4\Psi_{10} & \Psi_3 + \Psi_{11} + \Psi_{15} & 2\Psi_2 + \Psi_{11} + \Psi_{12}   & 2\Psi_4 + \Psi_{12} + \Psi_{17}
& \Psi_9 + \Psi_{13}+ \Psi_{18} & \Psi_9 + \Psi_{14}+ \Psi_{19} & \Psi_3+\Psi_{15}+ \Psi_{17} & \Psi_9 +\Psi_{16}+\Psi_{20}\\[6pt]
\Psi_3 + \Psi_{11} + \Psi_{15} & 2\Psi_7+2\Psi_{16} & 2\Psi_6+\Psi_{14} & \Psi_{17}+\Psi_{23}+2\Psi_{25}
& \Psi_{19}+\Psi_{24}+\Psi_{27} & \Psi_{18}+\Psi_{27} & \Psi_{15}+\Psi_{23}+4\Psi_{28} & \Psi_{18}+\Psi_{29} \\[6pt]
2\Psi_2 + \Psi_{11} + \Psi_{12}  & 2\Psi_6+\Psi_{14} & 6\Psi_5 +2\Psi_{13} & \Psi_{12}+4\Psi_{21}+\Psi_{23}
& \Psi_{14}+2\Psi_{22} & \Psi_{13}+2\Psi_{22}+\Psi_{24} & \Psi_{11}+\Psi_{23}+2\Psi_{25} & \Psi_{13}+\Psi_{24} +2\Psi_{26}\\[6pt]
2\Psi_4 + \Psi_{12} + \Psi_{17} & \Psi_{17}+\Psi_{23}+2\Psi_{25} & \Psi_{12}+4\Psi_{21}+\Psi_{23}  & 6\Psi_8+2\Psi_{20}
& \Psi_{20}+2\Psi_{26}+\Psi_{29} & \Psi_{20}+\Psi_{29}+2\Psi_{30} & 2\Psi_7+\Psi_{19} & \Psi_{19}+2\Psi_{30} \\[6pt]
\Psi_9 + \Psi_{13}+ \Psi_{18} & \Psi_{19}+\Psi_{24}+\Psi_{27} &  \Psi_{14}+2\Psi_{22} & \Psi_{20}+2\Psi_{26}+\Psi_{29}
& 2\Psi_{30}+2\Psi_{31} & \Psi_{29}+\Psi_{31} & \Psi_{16}+\Psi_{24} & \Psi_{27}+\Psi_{31} \\[6pt]
\Psi_9 + \Psi_{14}+ \Psi_{19} & \Psi_{18}+\Psi_{27} & \Psi_{13}+2\Psi_{22}+\Psi_{24} & \Psi_{20}+\Psi_{29}+2\Psi_{30}
& \Psi_{29}+\Psi_{31} & 2\Psi_{26}+2\Psi_{31} & \Psi_{16}+\Psi_{27} & \Psi_{24}+\Psi_{31} \\[6pt]
\Psi_3+\Psi_{15}+ \Psi_{17} & \Psi_{15}+\Psi_{23}+4\Psi_{28}  & \Psi_{11}+\Psi_{23}+2\Psi_{25} & 2\Psi_7+\Psi_{19}
& \Psi_{16}+\Psi_{24} & \Psi_{16}+\Psi_{27} & 2\Psi_6 + 2\Psi_{18} & \Psi_{14}+\Psi_{27}+\Psi_{29}\\[6pt]
\Psi_9 +\Psi_{16}+\Psi_{20} & \Psi_{18}+\Psi_{29} & \Psi_{13}+\Psi_{24} +2\Psi_{26} & \Psi_{19}+2\Psi_{30}
& \Psi_{27}+\Psi_{31} & \Psi_{24}+\Psi_{31} & \Psi_{14}+\Psi_{27}+\Psi_{29} & 2\Psi_{22}+2\Psi_{31} \\
\end{array}
\right]$$
}
\caption{The matrix $A_C(\overline{\Psi})$.}
\label{tab:CS}
\end{table}

From Theorem~\ref{thm:CauchySchwarz} we deduce the following.

\begin{cor}\label{cor:CauchySchwarz}
We have
\begin{equation}\label{eq:CS}
\overline{a}(A_C(\overline{\Psi}))\overline{a}^{\top} \geq 0
\end{equation}
for every $\Psi\in\Hom^+(\mathcal A,\mathbb R)$ and every $\overline{a} \in \mathbb{R}^8$.
\end{cor}

\subsection{Outregularity}\label{sec:Regularity}
In this section, we give a simple corollary of~\eqref{eq:outdegc} to frequencies of more general subgraphs.
\begin{lemma}\label{lem:outreg}
We have $\Psi(\lsem f \cdot (\alpha - c_0)\rsem_\lambda) = 0$ for every $f \in \mathcal{A}^\lambda$.
\end{lemma}
\begin{proof}
Since $\Psione(\alpha-c_0)=0$ with probability one by \eqref{eq:outdegc} and $\Psione\in\Hom^+(\mathcal{A}^\lambda,\mathbb R)$,
it follows that $\Psione( f \cdot (\alpha - c_0))=0$ with probability one for every $f \in \mathcal{A}^\lambda$.
The statement now follows from~(\ref{eq:Razb21}).
\end{proof}
For $\overline{b} \in \mathbb{R}^{14}$,
we get that
$$24\Psi(\lsem \overline{b}\overline{L}^{\top} \cdot (\alpha - c_0)\rsem_\lambda) = \overline{b}(B_{R}-c_0A_{R})\overline{\Psi}^{\top}$$
where the matrices $A_{R}$ and $B_{R}$ are given in Table~\ref{tab:reg}.
Lemma~\ref{lem:outreg} thus implies the following.

\begin{table}
{\footnotesize
$$A_{R}=\left[
\arraycolsep=2.5pt
\begin{array}{cccccccccccccccccccccccccccccccc}
12 & 6 & 3 & 3 & 3 & 3 & 3 & 3 & 3 & 0 & 0 & 0 & 0 & 0 & 0 & 0 & 0 & 0 & 0 & 0 & 0 & 0 & 0 & 0 & 0 & 0 & 0 & 0 & 0 & 0 & 0 & 0 \\
 0 & 2 & 2 & 2 & 2 & 0 & 0 & 0 & 0 & 3 & 4 & 2 & 2 & 1 & 1 & 2 & 1 & 2 & 1 & 1 & 1 & 0 & 0 & 0 & 0 & 0 & 0 & 0 & 0 & 0 & 0 & 0 \\
 0 & 2 & 2 & 2 & 2 & 0 & 0 & 0 & 0 & 3 & 4 & 2 & 2 & 1 & 1 & 2 & 1 & 2 & 1 & 1 & 1 & 0 & 0 & 0 & 0 & 0 & 0 & 0 & 0 & 0 & 0 & 0 \\
 0 & 0 & 2 & 0 & 0 & 6 & 2 & 0 & 0 & 0 & 0 & 2 & 2 & 4 & 2 & 0 & 0 & 0 & 0 & 0 & 0 & 4 & 4 & 2 & 2 & 2 & 2 & 0 & 0 & 0 & 0 & 0 \\
 0 & 0 & 0 & 1 & 0 & 0 & 2 & 2 & 0 & 0 & 0 & 1 & 0 & 0 & 1 & 2 & 2 & 1 & 2 & 1 & 0 & 0 & 0 & 2 & 1 & 2 & 0 & 2 & 4 & 1 & 0 & 0 \\
 0 & 2 & 2 & 2 & 2 & 0 & 0 & 0 & 0 & 3 & 4 & 2 & 2 & 1 & 1 & 2 & 1 & 2 & 1 & 1 & 1 & 0 & 0 & 0 & 0 & 0 & 0 & 0 & 0 & 0 & 0 & 0 \\
 0 & 0 & 0 & 1 & 0 & 0 & 2 & 2 & 0 & 0 & 0 & 1 & 0 & 0 & 1 & 2 & 2 & 1 & 2 & 1 & 0 & 0 & 0 & 2 & 1 & 2 & 0 & 2 & 4 & 1 & 0 & 0 \\
 0 & 0 & 0 & 0 & 2 & 0 & 0 & 2 & 6 & 0 & 0 & 0 & 2 & 0 & 0 & 0 & 0 & 2 & 0 & 2 & 4 & 4 & 0 & 2 & 0 & 2 & 2 & 0 & 0 & 2 & 4 & 0 \\
 0 & 0 & 0 & 0 & 1 & 0 & 0 & 1 & 3 & 0 & 0 & 0 & 1 & 0 & 0 & 0 & 0 & 1 & 0 & 1 & 2 & 2 & 0 & 1 & 0 & 1 & 1 & 0 & 0 & 1 & 2 & 0 \\
 0 & 0 & 0 & 0 & 0 & 0 & 0 & 0 & 0 & 1 & 0 & 0 & 0 & 1 & 1 & 0 & 1 & 0 & 1 & 1 & 1 & 0 & 2 & 0 & 2 & 0 & 2 & 2 & 0 & 2 & 2 & 4 \\
 0 & 0 & 0 & 1 & 0 & 0 & 2 & 2 & 0 & 0 & 0 & 1 & 0 & 0 & 1 & 2 & 2 & 1 & 2 & 1 & 0 & 0 & 0 & 2 & 1 & 2 & 0 & 2 & 4 & 1 & 0 & 0 \\
 0 & 0 & 0 & 0 & 0 & 0 & 0 & 0 & 0 & 1 & 0 & 0 & 0 & 1 & 1 & 0 & 1 & 0 & 1 & 1 & 1 & 0 & 2 & 0 & 2 & 0 & 2 & 2 & 0 & 2 & 2 & 4 \\
 0 & 0 & 1 & 0 & 0 & 3 & 1 & 0 & 0 & 0 & 0 & 1 & 1 & 2 & 1 & 0 & 0 & 0 & 0 & 0 & 0 & 2 & 2 & 1 & 1 & 1 & 1 & 0 & 0 & 0 & 0 & 0 \\
 0 & 0 & 0 & 0 & 0 & 0 & 0 & 0 & 0 & 1 & 0 & 0 & 0 & 1 & 1 & 0 & 1 & 0 & 1 & 1 & 1 & 0 & 2 & 0 & 2 & 0 & 2 & 2 & 0 & 2 & 2 & 4
\end{array}
\right]
$$

$$B_{R}=\left[
\arraycolsep=2.5pt
\begin{array}{cccccccccccccccccccccccccccccccc}
 0 & 1 & 2 & 1 & 0 & 3 & 2 & 1 & 0 & 0 & 0 & 0 & 0 & 0 & 0 & 0 & 0 & 0 & 0 & 0 & 0 & 0 & 0 & 0 & 0 & 0 & 0 & 0 & 0 & 0 & 0 & 0 \\
 0 & 0 & 0 & 0 & 0 & 0 & 0 & 0 & 0 & 0 & 2 & 2 & 1 & 1 & 1 & 1 & 1 & 0 & 0 & 0 & 0 & 0 & 0 & 0 & 0 & 0 & 0 & 0 & 0 & 0 & 0 & 0 \\
 0 & 0 & 0 & 0 & 2 & 0 & 0 & 0 & 0 & 2 & 0 & 0 & 1 & 1 & 1 & 0 & 0 & 1 & 1 & 1 & 0 & 0 & 0 & 0 & 0 & 0 & 0 & 0 & 0 & 0 & 0 & 0 \\
 0 & 0 & 0 & 0 & 0 & 0 & 0 & 0 & 0 & 0 & 0 & 0 & 1 & 1 & 1 & 0 & 0 & 0 & 0 & 0 & 0 & 4 & 4 & 1 & 1 & 0 & 0 & 0 & 0 & 0 & 0 & 0 \\
 0 & 0 & 0 & 0 & 0 & 0 & 0 & 0 & 0 & 0 & 0 & 0 & 0 & 0 & 0 & 0 & 0 & 1 & 1 & 1 & 0 & 0 & 0 & 1 & 1 & 2 & 0 & 2 & 0 & 0 & 0 & 0 \\
 0 & 0 & 0 & 1 & 0 & 0 & 0 & 0 & 0 & 1 & 0 & 1 & 0 & 1 & 0 & 1 & 0 & 0 & 1 & 0 & 0 & 0 & 0 & 0 & 0 & 0 & 0 & 0 & 0 & 0 & 0 & 0 \\
 0 & 0 & 0 & 0 & 0 & 0 & 0 & 0 & 0 & 0 & 0 & 0 & 0 & 0 & 0 & 1 & 1 & 0 & 0 & 0 & 0 & 0 & 0 & 1 & 1 & 0 & 0 & 0 & 4 & 0 & 0 & 0 \\
 0 & 0 & 0 & 0 & 0 & 0 & 0 & 0 & 0 & 0 & 0 & 0 & 0 & 0 & 0 & 0 & 0 & 1 & 0 & 0 & 1 & 0 & 0 & 1 & 0 & 2 & 2 & 0 & 0 & 1 & 0 & 0 \\
 0 & 0 & 0 & 0 & 0 & 0 & 0 & 0 & 3 & 0 & 0 & 0 & 0 & 0 & 0 & 0 & 0 & 0 & 0 & 0 & 2 & 0 & 0 & 0 & 0 & 0 & 1 & 0 & 0 & 1 & 1 & 0 \\
 0 & 0 & 0 & 0 & 0 & 0 & 0 & 0 & 0 & 0 & 0 & 0 & 0 & 0 & 0 & 0 & 0 & 0 & 0 & 0 & 1 & 0 & 0 & 0 & 0 & 0 & 2 & 0 & 0 & 2 & 2 & 3 \\
 0 & 0 & 0 & 0 & 0 & 0 & 0 & 2 & 0 & 0 & 0 & 0 & 0 & 0 & 0 & 0 & 2 & 0 & 0 & 1 & 0 & 0 & 0 & 0 & 1 & 0 & 0 & 1 & 0 & 0 & 0 & 0 \\
 0 & 0 & 0 & 0 & 0 & 0 & 0 & 0 & 0 & 0 & 0 & 0 & 0 & 0 & 0 & 0 & 0 & 0 & 0 & 1 & 0 & 0 & 0 & 0 & 1 & 0 & 0 & 1 & 0 & 0 & 2 & 2 \\
 0 & 0 & 0 & 0 & 0 & 0 & 1 & 0 & 0 & 0 & 0 & 0 & 0 & 0 & 1 & 0 & 0 & 0 & 0 & 0 & 0 & 0 & 1 & 0 & 0 & 0 & 0 & 0 & 0 & 0 & 0 & 0 \\
 0 & 0 & 0 & 0 & 0 & 0 & 0 & 0 & 0 & 0 & 0 & 0 & 0 & 0 & 0 & 0 & 0 & 0 & 1 & 0 & 0 & 0 & 0 & 0 & 0 & 0 & 0 & 1 & 0 & 1 & 0 & 1
\end{array}
\right]
$$
}
\caption{The matrices $A_{R}$ and $B_{R}$.}
\label{tab:reg}
\end{table}

\begin{cor}\label{cor:outreg}
We have $\overline{b}(B_{R}-c_0A_{R})\overline{\Psi}^{\top} \geq 0$
for every $\overline{b} \in \mathbb{R}^{14}$.
\end{cor}

Note that the inequality in Corollary~\ref{cor:outreg} always holds with an equality.

\subsection{Induction}\label{sec:Induction}
In this section we formalize the inductive argument of
Shen~\cite{Shen} in the language of flag algebras and
generalize it. Let $F=(D,\theta)$ be a  $\sigma$-flag. We
say that $F$ is a $\sigma$-\emph{source}
if $D$ contains no edge from $V(D)\setminus\Image(\theta)$ to $\Image(\theta)$.
The set of all $\sigma$-sources of order $k$ is denoted by $\mathcal{F}^{\sigma,\to}_{k}$.

Recall that $c_0=\delta_\alpha(\Psi)$ where $\Psi$ is the homomorphism fixed at the beginning of the section.
Let $\sigma$  be a type of order $k$ such that the vertex $1$ has indegree $k-1$ in $\sigma$, and
let $F_0=(D,\theta)$ be the $\sigma$-flag with $|V(D)|=k+1$ such that
every vertex of $\Image(\theta)$ is connected to the unique vertex in $V(D)\setminus\Image(\theta)$ by an outgoing edge.
Set
\begin{equation}\label{eq:indass} f(\sigma) := -c_0 +
\sum\{F : \; F \in \mathcal{F}^{\sigma,\to}_{k+1}, \; F
\not \cong F_0\} + c_0F_0\in{\mathcal A}^{\sigma}\;.
\end{equation}

\begin{lemma}\label{lem:induction}
We have $\Psi(\lsem f(\sigma) \rsem_{\sigma}) \geq 0$ for every type $\sigma$ where the vertex $1$ has indegree $|\sigma|-1$.
\end{lemma}
\begin{proof}
We distinguish two cases. If $\Psi((\sigma,0))=0$, then $\Psi(\lsem f(\sigma) \rsem_{\sigma})=0$.
Therefore, it suffices to consider the case $\Psi((\sigma,0))>0$.
We infer from \eqref{eq:Razb21} that the assertion of the lemma would follow if we proved that
\begin{equation*}
\probability[\boldsymbol{\Psi^{\sigma}}(f(\sigma))
\geq 0]=1\;.
\end{equation*}
We now prove this equality.

Let $k$ be the order of $\sigma$,
let $F_0=(D,\theta)$ be the $\sigma$-flag with $|V(D)|=k+1$ such that
every vertex of $\Image(\theta)$ is connected to the unique vertex in $V(D)\setminus\Image(\theta)$ by an outgoing edge, and
let $\send_i:[1]\rightarrow [k+1]$ be the mapping such that $\eta_i(1):=i$.
If $\Phi \in \Hom^{+}(\mathcal{A}^{\sigma},\mathbb{R})$ is such that
$\Phi(F_0)=0$, then we obtain using triangle-freeness (see Figure~\ref{fig:trianglefree}) that
\begin{equation}\label{eq:almf}\Phi(\pi^{\sigma,\send_1}(\alpha))
\leq \Phi\left(\sum\{F : \; F \in
\mathcal{F}^{\sigma,\to}_{k+1}, \; F \not \cong
F_0\}\right)\;.
\end{equation}
\begin{figure}
  \centering
\includegraphics{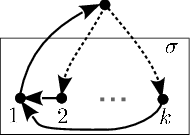}
\caption{The algebra element $\pi^{\sigma,\send_1}(\alpha)$ in~\eqref{eq:almf} expands into a sum of $2^{k-1}$ flags, each of which is in $\mathcal{F}^{\sigma,\to}_{k+1}$.}
  \label{fig:trianglefree}
\end{figure}
Theorem~\ref{thm:piproperties}~b) and (\ref{eq:outdegc}) (recall that $\Psi((\sigma,0))>0$) yield that
 $\boldsymbol{\Psi^{\sigma}}(\pi^{\sigma,\send_1}(\alpha))=c_0$ holds with probability one.
This combines with \eqref{eq:indass} and~\eqref{eq:almf} to the following:
$$\probability[\boldsymbol{\Psi^{\sigma}}(f(\sigma)) < 0 \:\&\: \boldsymbol{\Psi^{\sigma}}(F_0)=0]=0\;\mbox{.}$$
Therefore, it suffices to show that
\begin{equation}\label{eq:ind1}
\probability[\boldsymbol{\Psi^{\sigma}}(f(\sigma)) < 0
\:\&\: \boldsymbol{\Psi^{\sigma}}(F_0)>0]=0\;.
\end{equation}
We restrict our attention to the case $\probability[\boldsymbol{\Psi^{\sigma}}(F_0)>0]>0$;
otherwise, \eqref{eq:ind1} is zero.
Let $\sigma'$ be the (unique) type of order $k+1$
such that $(\sigma',\eta') \cong_{\sigma} F_0$
where $\eta'$ is the identity mapping from $[k]$ to itself.
Theorem~\ref{thm:piproperties}~b) and (\ref{eq:outdegc}) imply that
\begin{equation}
\probability[\boldsymbol{\Psi^{\sigma'}}(\pi^{\sigma',\send_{k+1}}(\alpha-c_0))=0]=1\;.\label{eq-ts1}
\end{equation}
It follows that
\begin{equation}
\probability^{\sigma}\big[\probability^{\sigma',\eta'}[\boldsymbol{(\Psi^{\sigma})^{\sigma',\eta'}}(\pi^{\sigma',\send_{k+1}}(\alpha-c_0))=0]=1\big]=1\;.\label{eq-ts2}
\end{equation}
Indeed, the transition from (\ref{eq-ts1}) to (\ref{eq-ts2}) corresponds to sampling $\sigma'$ in two steps by sampling first $\sigma$ and then extending the sample to $\sigma'$.

Thus \eqref{eq:ind1} will be established by the following claim.
\begin{claim}\label{cl:sp}
For every  $\Phi \in \Hom^{+}(\mathcal{A}^{\sigma},\mathbb{R})$ with $\Phi(F_0)>0$ and
\begin{equation}\label{eq:assCl}
\probability[\boldsymbol{\Phi^{\sigma',\eta'}}(\pi^{\sigma',\send_{k+1}}(\alpha-c_0))=0]=1
\end{equation}
we have
\begin{equation}\label{qmf}
\Phi(f(\sigma)) \geq 0\;.
\end{equation}
\end{claim}
\begin{proof}[Proof of Claim~\ref{cl:sp}]
Let $\omega=\alpha+\overline{\alpha}+\gamma$  be the formal sum of the three elements of $\calF^\lambda_2$.
Note that $\omega=\lambda$ if considered as an element of $\mathcal{A}^\lambda$
but we are going to apply the mapping $\pi^{\wr(\sigma',\eta')}_{\lambda}$ and
$\pi^{\wr(\sigma',\eta')}_{\lambda}(\lambda)=\sigma'\not=\pi^{\wr(\sigma',\eta')}_{\lambda}(\omega)$.
Also note that
$\pi^{\sigma',\eta'}(F_0)=\pi^{\wr(\sigma',\eta')}_{\lambda}(\omega)$ (see Figure~\ref{fig:picturea} for illustration).
\begin{figure}
  \centering
\includegraphics[scale=0.92]{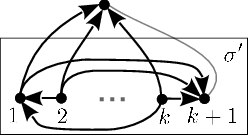}
\caption{The algebra element $\pi^{\sigma',\eta'}(F_0)=\pi^{\wr(\sigma',\eta')}_{\lambda}(\omega)$.}
  \label{fig:picturea}
\end{figure}
For every $\epsilon>0$, we have
\begin{align}\label{eq:ind2}
\pi^{\sigma',\eta'}(f(\sigma)+\epsilon) &=
\pi^{\sigma',\eta'}\left(\sum\{F : \; F \in
\mathcal{F}^{\sigma,\to}_{k+1}, \; F \not \cong F_0\} +
c_0F_0-c_0+\epsilon\right) \notag\\ &\geq_{\sigma'}
\pi^{\sigma',\send_{k+1}}(\alpha-c_0) -
\pi^{\wr(\sigma',\eta')}_{\lambda}(\alpha-(c_0+\epsilon)\omega)\;,
\end{align}
since the digraphs considered are triangle-free (see Figure~\ref{fig:pictureb}).
\begin{figure}
  \centering
\includegraphics[scale=0.92]{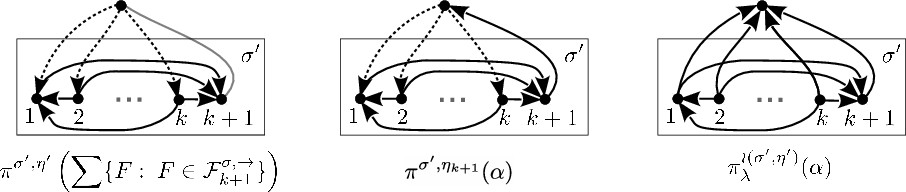}
\caption{Algebra elements appearing in~\eqref{eq:ind2}.}
  \label{fig:pictureb}
\end{figure}
 It follows that 
\begin{equation}\label{eq:ind3}
\probability\left[\boldsymbol{\Phi^{\sigma',\eta'}}\left(\pi^{\sigma',\eta'}(f(\sigma)+\epsilon)\right)
\geq
\boldsymbol{\Phi^{\sigma',\eta'}}\left(\pi^{\sigma',\send_{k+1}}(\alpha-c_0)
-
\pi^{\wr(\sigma',\eta')}_{\lambda}(\alpha-(c_0+\epsilon)\omega)\right)\right]=1\;.
\end{equation}
The identity (\ref{eq:ind3}) can be rewritten using~\eqref{eq:assCl} as
\begin{equation}\label{eq:PrPo}
\probability\left[\boldsymbol{\Phi^{\sigma',\eta'}}\left(\pi^{\sigma',\eta'}(f(\sigma)+\epsilon)\right)
\geq
\boldsymbol{\Phi^{\sigma',\eta'}}\left(\pi^{\wr(\sigma',\eta')}_{\lambda}((c_0+\epsilon)\omega-\alpha)\right)\right]=1\;.
\end{equation}
We claim that
\begin{equation}\label{eq:kos}
\probability[\boldsymbol{\Phi^{\sigma',\eta'}}(\pi^{\wr(\sigma',\eta')}_{\lambda}((c_0+\epsilon)\omega-\alpha))\ge 0]>0\;.
\end{equation}
Suppose that~\eqref{eq:kos} fails for some $\epsilon>0$. Then $\probability[\boldsymbol{\Phi^{\sigma',\eta'}}(\pi^{\wr(\sigma',\eta')}_{\lambda}(\alpha-(c_0+\epsilon)\omega)) > 0]=1$, and Theorem~\ref{thm:piveeproperties}~b) would imply that $\probability[\boldsymbol{(\Phi^{\wr(\sigma',\eta')})^\lambda}(\alpha) \ge c_0+\epsilon]=1$.
This would give that $\delta_{\alpha}(\Phi^{\wr(\sigma',\eta')}) \geq c_0+\epsilon$,
which would contradict the choice of~$\Psi$ as
a homomorphism with the maximum possible value of $\delta_\alpha(\Psi)$ in $\mathrm{Hom}^+(\mathcal{A},\mathbb{R})$.
So, \eqref{eq:kos} indeed holds.

Combining~\eqref{eq:PrPo} and~\eqref{eq:kos} gives that
$$
\probability[\boldsymbol{\Phi^{\sigma',\eta'}}\left(\pi^{\sigma',\eta'}(f(\sigma)+\epsilon)\right) \geq 0]=
\probability[\boldsymbol{\Phi^{\sigma',\eta'}}\left(\pi^{\sigma',\eta'}(f(\sigma))\right)+\epsilon \geq 0]
>0\;\mbox{.}$$
The claimed inequality \eqref{qmf} follows by considering $\epsilon \searrow 0$ and using Theorem~\ref{thm:piproperties}~a),
which finishes the proof of Claim~\ref{cl:sp}.
\end{proof}
The proof of Lemma~\ref{lem:induction} is now also finished.
\end{proof}

Let $T$ and $V$ be the types of order 3 with $E(T)=\{23,21,31\}$ and $E(V)=\{21,31\}$, respectively.
Note that $T$ and $V$ are the only types of order $3$ satisfying the conditions of Lemma~\ref{lem:induction}.  We have
\begin{align}
\begin{split}\label{eq:IndT}
24\cdot\Psi(\lsem f(T) \rsem_T) = &(1-c_0)\Psi_9-c_0\Psi_{13}-c_0\Psi_{14}-c_0\Psi_{16}+(1-c_0)\Psi_{18}+(1-c_0)\Psi_{19}+(1-c_0)\Psi_{20}\\
  & -2c_0\Psi_{22}-2c_0\Psi_{24}-2c_0\Psi_{26}+(1-2c_0)\Psi_{27} +(1-2c_0)\Psi_{29}\\
  & +(2-2c_0)\Psi_{30}-3c_0\Psi_{31}\;,\mbox{and}
\end{split}\\
\begin{split}\label{eq:IndV}
12\cdot\Psi(\lsem f(V) \rsem_V) = &(1-c_0)\Psi_2-3c_0\Psi_5+(1-c_0)\Psi_6-c_0\Psi_{11}+(1-c_0)\Psi_{12}-2c_0\Psi_{13}+(1-c_0)\Psi_{14}\\
&+(2-2c_0)\Psi_{21}-c_0\Psi_{22}-c_0\Psi_{23}-c_0\Psi_{24}-c_0\Psi_{25}-c_0\Psi_{26}\;.
\end{split}
\end{align}

The right hand sides of the identities (\ref{eq:IndT}) and (\ref{eq:IndV}), which depend on $\overline{\Psi}$,
are denoted by $\Ind_T(\overline{\Psi})$ and $\Ind_V(\overline{\Psi})$, respectively.
Lemma~\ref{lem:induction} gives the following.

\begin{cor}\label{cor:induction} $\Ind_T(\overline{\Psi}) \geq 0$ and $\Ind_V(\overline{\Psi}) \geq 0$.
\end{cor}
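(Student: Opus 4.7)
The proof is a direct application of Lemma~\ref{lem:induction} to the two types $\sigma=T$ and $\sigma=V$, followed by an explicit expansion of the resulting expressions $\rho(\lsem f(T)\rsem_T)$ and $\rho(\lsem f(V)\rsem_V)$ in the basis $\{H_0,\ldots,H_{31}\}$ of $\calF^0_4$.

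First, I would verify that both $T$ and $V$ satisfy the hypothesis of Lemma~\ref{lem:induction}: each is a triangle-free type of order $3$ in which vertex $1$ has indegree $2$, coming from the edges $21$ and $31$. For each $\sigma\in\{T,V\}$ the flag $F_0$ singled out by the lemma is the unique order-$4$ $\sigma$-flag obtained by adjoining a new vertex $v$ together with the three edges $1v,2v,3v$; triangle-freeness of this extension is immediate because $v$ is then a sink. Lemma~\ref{lem:induction} thus yields $\rho(\lsem f(T)\rsem_T)\ge 0$ and $\rho(\lsem f(V)\rsem_V)\ge 0$.

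The remainder is a computation. Using the identity $1_\sigma\equiv\sum_{\tilde F\in\mathcal{F}^\sigma_4}\tilde F$ in $\mathcal{A}^\sigma$ (the instance of~\eqref{eq:factor} with $F'=1_\sigma$), I would rewrite $f(\sigma)$ as $(1-c)$ times the sum of all order-$4$ $\sigma$-sources distinct from $F_0$, plus $(-c)$ times the sum of all order-$4$ $\sigma$-flags that are not sources; note that the contributions of $F_0$ from the $+cF_0$ and $-c\cdot 1_\sigma$ pieces cancel. The averaging operator then sends each $F\in\mathcal{F}^\sigma_4$ to $q_{\sigma,0}(F)$ times its underlying unlabelled digraph, and applying $\rho$ produces a linear combination of $\rho_0,\ldots,\rho_{31}$ whose coefficients are obtained by summing over all $\sigma$-flags with a fixed underlying $H_i$.

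The enumeration of $\sigma$-sources is painless: the source condition forbids every edge of the form $vi$, so an order-$4$ $\sigma$-source is determined by the subset of $\{1v,2v,3v\}$ that it contains, and triangle-freeness is automatic since $v$ is a sink. Thus there are at most eight sources per type. The main obstacle is the careful but routine bookkeeping needed to (i) match each source and each non-source $\sigma$-flag of order~$4$ with the appropriate $H_i$, (ii) compute its normalizing factor $q_{\sigma,0}$, and (iii) sum the contributions within each isomorphism class. Coefficients such as $(2-2c)\rho_{30}$ in $Ind_T$ and $(2-2c)\rho_{21}$ in $Ind_V$ simply record the orbit sizes under this grouping. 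Once the identifications with~\eqref{eq:IndT} and~\eqref{eq:IndV} are verified, the corollary follows directly from the two inequalities supplied by Lemma~\ref{lem:induction}.
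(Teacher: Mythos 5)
Your proposal is correct and matches the paper's approach: the paper likewise obtains the corollary as a direct application of Lemma~\ref{lem:induction} to $T$ and $V$ (the only two order-$3$ types satisfying its hypotheses), followed by the expansion of $\lsem f(T)\rsem_T$ and $\lsem f(V)\rsem_V$ in the basis $\calF^0_4$ recorded in~\eqref{eq:IndT} and~\eqref{eq:IndV}. Your rewriting of $f(\sigma)$ as $(1-c)\sum_{F\text{ source},\,F\not\cong F_0}F-c\sum_{F\text{ not source}}F$, and the observation that $v$ must be a sink so there are at most eight sources, are exactly the bookkeeping underlying those identities.
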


\subsection{Density of forks}\label{sec:Fork}
We now aim at providing a lower bound on $\Psi(\kappa)$.

\begin{lemma}\label{lem:Au1}
Let $\Phi
\in \Hom^+(\mathcal{A}^{\lambda},\mathbb{R})$ be such that
$\Phi((\beta,1))>0$. If
\begin{equation}\label{eq:forkproof0}
\probability[\boldsymbol{\Phi^{\beta,1}}(\pi^{\beta,2}(\alpha))=c_0]=1\;,
\end{equation}
then $\Phi(\gamma) \ge c_0- \sqrt{\Phi(\Fork)}$.
\end{lemma}
\begin{proof}
By Lemma~\ref{lem:HHKdegree2} applied to
$\Phi^{\wr(\beta,1)}$ we have for every $\epsilon>0$ that
\begin{equation}\label{eq:forkproof01}
\probability[\boldsymbol{(\Phi^{\wr(\beta,1)})^\lambda}(\alpha)<\sqrt{1-\Phi^{\wr(\beta,1)}(\varrho)}+\epsilon]>0\;.
\end{equation}
As in Claim~\ref{cl:sp}, let $\omega=\alpha+\overline{\alpha}+\gamma$  be the formal sum of the three elements of $\calF^\lambda_2$, and
let $\bar\varrho  \in\mathcal{F}_2$ be the edgeless digraph on two vertices. Note that $\bar\varrho = 1 - \varrho$ holds in $\mathcal{A}$ but
$\pi^{\wr(\beta,1)}(\bar{\varrho})=\Fork \neq \pi^{\wr(\beta,1)}(1 -\varrho)$.  The inequality (\ref{eq:forkproof01}) can be rewritten as
$$
\probability[\boldsymbol{(\Phi^{\wr(\beta,1)})^\lambda}\left(\alpha-\left(\sqrt{\Phi^{\wr(\beta,1)}(\bar \varrho)}+\epsilon\right)\omega\right)<0]>0\;.$$ By the counterpositive
of Theorem~\ref{thm:piveeproperties}~b) we have
\begin{equation}\label{eq:forkproof02}
\probability[\boldsymbol{\Phi^{\beta,1}}\left(\pi^{\wr(\beta,1)}_\lambda(\alpha)-\sqrt{\Phi^{\wr(\beta,1)}(\bar\varrho)}\pi^{\wr(\beta,1)}_\lambda(\omega)\right) - \epsilon<0]>0\;
\end{equation}
for every $\epsilon>0$.
By~(\ref{eq-defwr}) we have $\Phi^{\wr(\beta,1)}(\bar \varrho)=\Phi(\Fork)/\Phi(\alpha)^2$, as $\pi^{\wr(\beta,1)}(\bar{\varrho})=\Fork$. 
Theorem~\ref{thm:piproperties}~a) and
the identity $\pi^{\wr(\beta,1)}_\lambda(\omega)=\pi^{\beta,1}(\alpha)$ yield that
$P[\boldsymbol{\Phi^{\beta,1}}(\pi^{\wr(\beta,1)}_\lambda(\omega)) = \Phi(\alpha)]=1$.
We now get from (\ref{eq:forkproof02}) that
\begin{equation}\label{eq:forkproof1}
\probability[\boldsymbol{\Phi^{\beta,1}}(\pi^{\wr(\beta,1)}_{\lambda}(\alpha))<\sqrt{\Phi(\Fork)}+\epsilon]>0\;, 
\end{equation} 
for every $\epsilon>0$.
It
follows from (\ref{eq:forkproof0}) and (\ref{eq:forkproof1}) that
\begin{equation}\label{eq:forkproof2}
\probability[\boldsymbol{\Phi^{\beta,1}}(\pi^{\beta,2}(\alpha)-\pi^{\wr(\beta,1)}_\lambda(\alpha))>
c_0- \sqrt{\Phi(\Fork)}-\epsilon]>0\;.
\end{equation}
We have $\pi^{\beta,2}(\alpha)-\pi^{\wr(\beta,1)}_\lambda(\alpha)
\leq_\beta \pi^{\beta,1}(\gamma)$ (see Figure~\ref{fig:threefurther}).
\begin{figure}
  \centering
\includegraphics[scale=0.92]{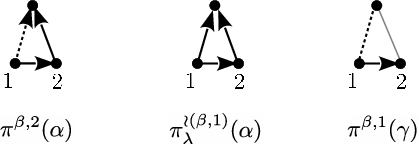}
\caption{Graphs involved in Lemma~\ref{lem:Au1}.}
  \label{fig:threefurther}
\end{figure}
Thus,
\begin{equation}\label{eq:forkproof3}
\probability[\boldsymbol{\Phi^{\beta,1}}(\pi^{\beta,1}(\gamma))>
c_0- \sqrt{\Phi(\Fork)}-\epsilon]>0
\end{equation}
for every $\epsilon>0$.

We claim that $\Phi(\gamma) \ge
c_0- \sqrt{\Phi(\Fork)}$. Indeed, suppose for contradiction
that $\Phi(\gamma)=
c_0- \sqrt{\Phi(\Fork)}-\epsilon'$ for some $\epsilon'>0$.
By Theorem~\ref{thm:piproperties}~a) we conclude that
$$\probability\left[\boldsymbol{\Phi^{\beta,1}}\left(\pi^{\beta,1}(\gamma)- c_0+ \sqrt{\Phi(\Fork)}+\epsilon'\right)=0\right]=1\;,$$ a contradiction to~\eqref{eq:forkproof3}.
The proof of the claim is now finished.
\end{proof}

\begin{lemma}\label{lem:forkdensitylimit} It holds that $\Psi(\kappa)\ge 3(3c_0-1)^2\;$.
\end{lemma}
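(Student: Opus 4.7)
The plan is to pass to a sequence of triangle-free digraphs converging to $\rho$ and count forks directly, applying the Chudnovsky--Seymour--Sullivan-type Lemma~\ref{lem:HHKdegree} inside each out-neighbourhood. By Theorem~\ref{thm:mindegreeconvergence} I would fix a sequence $\{D_n\}$ of triangle-free digraphs with $\lim D_n=\rho$ and $\delta^+(D_n)/|V(D_n)|\to c$. Writing $N=|V(D_n)|$, the density $\rho(\kappa)$ is the limit of $\binom{N}{3}^{-1}$ times the number of induced forks in $D_n$, each fork being counted uniquely at its centre.

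The heart of the argument will be a local lower bound on out-degrees inside out-neighbourhoods. Fix $v\in V(D_n)$ and set $d_v=|N^+(v)|$, $e_v=|E(D_n[N^+(v)])|$; the forks centred at $v$ correspond bijectively to non-edges of the triangle-free sub-digraph $D_n[N^+(v)]$, so their number is $\binom{d_v}{2}-e_v$. For any $u\in N^+(v)$, triangle-freeness forces $N^+(u)\cap \Gamma^-(v)=\emptyset$ (otherwise $v\to u\to w\to v$ is a triangle), and since also $N^+(u)\cap\{u,v\}=\emptyset$, a union bound gives
\[
|N^+(u)\cap N^+(v)|\;\ge\;|N^+(u)|+|N^+(v)|+|\Gamma^-(v)|-N+1\;\ge\;(2c-1)N+|\Gamma^-(v)|-o(N).
\]
Hence the minimum out-degree of the triangle-free digraph $D_n[N^+(v)]$ is at least $\delta_v^\star-o(N)$, where $\delta_v^\star:=\max(0,(2c-1)N+|\Gamma^-(v)|)$, and applying Lemma~\ref{lem:HHKdegree} inside $D_n[N^+(v)]$ then yields $\binom{d_v}{2}-e_v\ge(\delta_v^\star)^2/2-o(N^2)$.

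Summing over $v$ and invoking the handshake identity $\sum_v|\Gamma^-(v)|=|E(D_n)|\ge cN^2-o(N^2)$, the convexity of $x\mapsto\max(0,x)^2$ together with Jensen's inequality will give
\[
\sum_v(\delta_v^\star)^2\;\ge\;N\cdot\max\Bigl(0,(2c-1)N+\tfrac{1}{N}\sum_v|\Gamma^-(v)|\Bigr)^2\;\ge\;(3c-1)^2N^3-o(N^3)
\]
whenever $c\ge 1/3$. Dividing $\sum_v\bigl(\binom{d_v}{2}-e_v\bigr)$ by $\binom{N}{3}\sim N^3/6$ and letting $n\to\infty$ yields the desired bound $\rho(\kappa)\ge 3(3c-1)^2$.

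The main obstacle I expect is the structural inequality of the second paragraph, which requires simultaneously using the outdegree hypothesis for both $v$ and $u$ along with the triangle-free exclusion of $\Gamma^-(v)$ from $N^+(u)$. The remaining steps are routine, though in the Jensen step one must take care that vertices $v$ for which $(2c-1)N+|\Gamma^-(v)|<0$ contribute nothing to the lower bound, which is precisely why the cut-off $\max(0,\cdot)^2$ is used. A purely flag-algebraic variant would lift $\boldsymbol{\rho^1}$ via the $\wr(\alpha,\eta)$-construction (with $\eta(1)=1$) to a ``neighbourhood'' homomorphism in $\Hom^+(\mathcal{A}^0,\mathbb{R})$ and then invoke Lemma~\ref{lem:HHKdegree2} there, but the direct digraph route above avoids bookkeeping with normalisation factors.
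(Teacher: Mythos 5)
Your proof is correct and is really the same argument as the paper's, just executed before passing to the limit rather than after. The paper works entirely inside the flag-algebra framework: it applies the limit version of the Chudnovsky--Seymour--Sullivan/HHK bound (Lemma~\ref{lem:HHKdegree2}) to the out-neighbourhood homomorphism $\phi^{\wr(\beta,1)}$ for each realisation $\phi$ of the random rooted homomorphism $\boldsymbol{\rho^1}$, obtaining the pointwise inequality $\phi(\gamma)\ge c-\sqrt{\phi(\Fork)}$, and then takes an expectation and uses Jensen on $\sqrt{\cdot}$, together with $\expectation[\boldsymbol{\rho^1}(\gamma)]=1-2c$. You instead fix a convergent sequence $\{D_n\}$ with the correct limiting minimum out-degree (Theorem~\ref{thm:mindegreeconvergence}), count forks by centre, lower-bound the minimum out-degree inside each $D_n[N^+(v)]$ via the triangle-free exclusion $N^+(u)\cap\Gamma^-(v)=\emptyset$, apply the finite Lemma~\ref{lem:HHKdegree} per out-neighbourhood, and then use the handshake identity and Jensen on $\max(0,\cdot)^2$. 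The two Jensen steps and the two local HHK applications are term-for-term the same inequality: your $\delta_v^\star=\max(0,(2c-1)N+|\Gamma^-(v)|)$ matches the paper's $c-\boldsymbol{\rho^1}(\gamma)$ since $|\Gamma^-(v)|/N\approx 1-c-\boldsymbol{\rho^1}(\gamma)$ when all out-degrees equal $cN$. The trade-off is that your route avoids the $\wr$-construction and rooted-homomorphism bookkeeping (as you note), at the cost of carrying $o(N)$ error terms and having to verify explicitly that the fork count converges to $\rho(\kappa)\binom{N}{3}$ and that $\sum_v|\Gamma^-(v)|=|E(D_n)|\ge cN^2-o(N^2)$, both of which you do correctly.
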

\begin{proof} 
The homomorphism $\boldsymbol{\Psi^{\lambda}}$
satisfies
$\boldsymbol{\Psi^{\lambda}}((\beta,1))>0$ and (\ref{eq:forkproof0}) with probability one.
Therefore, Lemma~\ref{lem:Au1} implies that
$$\probability\left[\boldsymbol{\Psi^{\lambda}}(\gamma)+\sqrt{\boldsymbol{\Psi^{\lambda}}(\Fork)}\geq c_0\right]=1\;.$$
Consequently,
$$\expectation\left[\sqrt{\boldsymbol{\Psi^{\lambda}}(\Fork)}\right] \geq
c_0-\expectation[\boldsymbol{\Psi^{\lambda}}(\gamma)]=
c_0-\left(1-\expectation[\boldsymbol{\Psi^{\lambda}}(\alpha)]-\expectation[\boldsymbol{\Psi^{\lambda}}(\bar\alpha)]\right)=
3c_0-1\;.$$
The lemma now follows from Jensen's inequality since $\Psi(\kappa)=3\expectation[\boldsymbol{\Psi^{\lambda}}(\Fork)]$.
\end{proof}

We get by expanding $\kappa-3(3c_0-1)^2$ that
\begin{align*}
4\Psi(\kappa-3(3c_0-1)^2) = &\Psi_4+\Psi_7+3\Psi_8+\Psi_{12}+\Psi_{17}+\Psi_{19}+2\Psi_{20}+2\Psi_{21} \notag
\\ &+\Psi_{23}+\Psi_{25}+\Psi_{26}+\Psi_{29}+2\Psi_{30} -
12(3c_0-1)^2\sum_{i=0}^{31}\Psi_i\;.
\end{align*}
Let us write $\FFork(\overline{\Psi})$ for the right hand side of this identity.
The next inequality follows directly from Lemma~\ref{lem:forkdensitylimit}.
\begin{cor}\label{cor:fork} $\FFork(\overline{\Psi}) \geq 0$.
\end{cor}

\subsection{Combining the ingredients}

Let $R(c)$ denote the set of vectors $\overline{r} \in \mathbb{R}^{32}$ such that
\begin{enumerate}
\item[(P1)] $\overline{r} \geq \overline{0}$,
\item[(P2)] $\|\overline{r}\|_1 = 1$,
\item[(P3)] $\overline{a}(A_C(\overline{r}))\overline{a}^{\top} \geq 0$
for every $\overline{a} \in \mathbb{R}^8$,
\item[(P4)] $\overline{b}(B_{R}-cA_{R})\overline{r}^{\top} \geq
0$ for every $\overline{b} \in \mathbb{R}^{14}$,
\item[(P5)] $\Ind_T(\overline{r}) \geq 0$ and $\Ind_V(\overline{r})
\geq 0$, and
\item[(P6)] $\FFork(\overline{r}) \geq 0$.
\end{enumerate}

Corollaries~\ref{cor:CauchySchwarz},~\ref{cor:outreg},~\ref{cor:induction} and~\ref{cor:fork} imply that $\overline{\Psi} \in R(c_0)$.
Therefore, Theorem~\ref{prop:asymptotic} is implied by the next proposition.

\begin{proposition}\label{prop:numeric}
If $c\geq 0.3465$, then $R(c)$ is empty.
\end{proposition}

\begin{proof}
Let
$$\begin{array}{lrrrrrrrrl}
\overline{a}_1 = ( & -69.83, & -27.04, & 3.45, & -53.59, &
1.74, & 28.78, & -9.28, & 59.66 & )\;,\\
\overline{a}_2 = ( & -44.57, & -25.93, & -24.40, & -30.16, &
2.40, & 5.40, & 15.67, & 37.27 & )\;,
\\ \overline{a}_3 = ( & 86.95, & 58.70, & 35.15, & 52.46,&
-18.52, & 3.32, & -52.56, & -57.83 & )\;,
\\ \overline{a}_4 = (
& -1.29, & 0.17, & 57.48, & -26.29, & 10.28, & 26.90, &
-27.33, & -9.15 & )\;,
\end{array}
$$
$$
\overline{b} =
(0,0,-17448,-16496,26501,-24163,-8929,-54193,-30136,7267,-24582,-42769,22644,0),
$$
\begin{align*}
c_T &= 39648\;,\\
c_V &= 19877\;\mbox{, and}\\
d &= 2078\;.
\end{align*}
Further, let
\begin{align*}
F(c,\overline{r}):= \left(\sum_{i=1}^{4}\overline{a_i}(A_C(\overline{r}))\overline{a_i}^{\top}\right) + \overline{b}(B_{R}-cA_{R})\overline{r}^{\top}
&+ c_T\Ind_T(\overline{r}) + c_V \Ind_V(\overline{r}) + d \cdot
\FFork(\overline{r})\;.
\end{align*}
By definition of $R(c)$, we have $F(c,\overline{r}) \geq 0$ for every $\overline{r} \in R(c)$. Moreover, it can be directly verified
(see Maple worksheet, available as an ancillary file on the arXiv)
that
the function $F(c,\overline{r})$ is a non-increasing function for $c\in[1/3,1]$ if $\overline{r} \geq \overline{0}$ is fixed.
On the other hand, we get by computing $F(0.3465,\overline{r})$
\begin{align}\label{eq:final}
\begin{array}{crcrcrcr}
- & 38.906394\:\overline{r}_0 & - & 25.96859\:\overline{r}_1 & - &4156.34069\:\overline{r}_2 & - & 16.447994\:\overline{r}_3 \\
- & 1172.27439\:\overline{r}_4 &- & 577.3814\:\overline{r}_5 & - & 4.57689\:\overline{r}_6 & - &10.55419\:\overline{r}_7 \\
- & 4042.1489\:\overline{r}_8 & - & 10.328894\:\overline{r}_9& - & 13.03079\:\overline{r}_{10} & - & 1327.03609\:\overline{r}_{11}\\
- &2658.54869\:\overline{r}_{12} & - & 9.71489\:\overline{r}_{13} & - & 14574.68439\:\overline{r}_{14}& - & 7.032994\:\overline{r}_{15}\\
- & 6.85949\:\overline{r}_{16} & - &11279.04479\:\overline{r}_{17} & - & 7.458494\:\overline{r}_{18} & - & 15538.64129\:\overline{r}_{19}\\
- & 19.61149\:\overline{r}_{20} & - & 15.87099\:\overline{r}_{21} & - &12.39949\:\overline{r}_{22} & - & 9949.057894\:\overline{r}_{23}\\
- & 9.5492\:\overline{r}_{24} &- & 12.55709400\:\overline{r}_{25} & - & 17.24429\:\overline{r}_{26} & - &9.535194\:\overline{r}_{27}\\
- & 1.24639\:\overline{r}_{28} & - & 3070.47399\:\overline{r}_{29}&
- & 17.36519\:\overline{r}_{30} & - & 13.03819\:\overline{r}_{31}\;.
\end{array}
\end{align}
The coefficients in (\ref{eq:final}) are exact.
The expression (\ref{eq:final}) is bounded from above by
$F(0.3465,\overline{r})\le-1.24639\|\overline{r}\|_1<0$.
Proposition~\ref{prop:numeric} follows.
\end{proof}

The proof of the proposition concludes the section since Theorem~\ref{prop:asymptotic} is now proven.

\section{Concluding remarks}

\subsection{The algorithm}\label{sec:algorithm} We now
sketch how a computer aided us with a search for the right coefficients used in the estimates.
For a fixed value of $c$, the conditions (P1), (P2) and (P4)--(P6) in the definition of $R(c)$
give a polytope in $\mathbb{R}^{32}$, which is further denoted by $R'(c)$.
The polytope $R'(c)$ is defined by $68$ linear identities and inequalities.
It is not hard to check (using linear programming) whether $R'(c)$ is empty,
while it is less straightforward to verify whether $R'(c)$ contains a point that
satisfies the condition (P3) for every $\overline{a} \in \mathbb{R}^{8}$.
Since a particular point $\overline{r} \in \mathbb{R}^{32}$ satisfies (P3) for all choices $\overline{a}$ iff $A_C(\overline{r})$ is positive semidefinite,
the search problem for the value $c$ when $R(c)$ becomes empty can be expressed as a semidefinite program.
However, we have searched for such value $c$ using an iterative linear programming based algorithm,
which required less programming.

\subsection{Improving the bound}

It is possible to improve the bound we give in
Theorem~\ref{thm:main} slightly at the expense of a more
involved proof. As the improvements we were able to produce
were not significant and relied on technical and
computational tweaks, rather than new ideas.
So, we have chosen to present a simpler bound.
Still, let us point out some ways to improve the bound we give.

{\bf 1.} Instead of concentrating on the values $\Psi$
takes on elements of $\mathcal{F}_{4}$, one can examine
$\mathcal{F}_k$ for $k=5,6,\ldots$. This leads to a significant increase of matrices $A_C, A_{R}$ and $B_{R}$ and
the involved coefficients in size. R\'emi de Joannis de Verclos, Jean-S\'ebastien Sereni and Jan Volec pushed the bound in Theorem~\ref{thm:main} down to~0.3388 in March 2014
by extending the approach presented in this paper. (Their proof uses elements $\mathcal{F}_6$ in calculations and it incorporates several additional bounds.)
It seems likely that the bound can be pushed even further.

{\bf 2.} One can attempt to generalize
Lemma~\ref{lem:induction}.
We initially considered a more general class of induction hypotheses 
but it turned out that adding additional estimates does not lead to
further improvements of the bound.

{\bf 3.} Chudnovsky, Seymour and Sullivan~\cite{ChuSeySull}
conjectured that one can delete $k/2$ edges from a
triangle-free digraph $D$ with at most $k$ non-edges to
make it acyclic. This conjecture implies improvements of
Lemmas~\ref{lem:HHKdegree} and~\ref{lem:HHKdegree2}.
Dunkum, Hamburger and P\'or~\cite{DunHamPor} have recently
shown that deleting $0.88k$ edges suffices, and
Shen [private communication] further improved this bound to
$0.865k$. These results allow us to improve
Lemma~\ref{lem:HHKdegree2}, but such an improvement in turn
only produces a tiny decrease in the bound
Theorem~\ref{thm:main}. However, the proofs
in~\cite{ChuSeySull,DunHamPor} can both be recast in the
language of flag algebras. It might be  interesting see
if one can obtain a generalization of
Lemma~\ref{lem:HHKdegree} in this manner and use it to
improve Theorem~\ref{thm:main}.

\section*{Acknowledgement}
The computer search for the coefficients of the vector $F(c_0,\overline r)$ was done using QSopt linear programming solver.
We would like to thank its author William Cook for making the tool available and answering our questions concerning its usage.
We are also grateful for valuable comments on the manuscript provided by the anonymous referees. Finally, we would like to thank
Jean-S\'ebastien Sereni for his extremely detailed comments including pointing out typos in some of our formulas.

\bibliographystyle{plain}
\bibliography{bibl}
\end{document}